\begin{document}
\font\tenrm=cmr10

%
 \newtheorem{thm}{Theorem}[section]
 \newtheorem{cor}[thm]{Corollary}
 \newtheorem{lem}[thm]{Lemma}
 \newtheorem{prop}[thm]{Proposition}
 \theoremstyle{definition}
 \newtheorem{defn}[thm]{Definition}
 \theoremstyle{remark}
 \newtheorem{rem}[thm]{Remark}
 \newtheorem*{ex}{Example}

\renewcommand{\theequation}{\thesection.\arabic{equation}}

\title{\bf A new result on backward uniqueness for parabolic operators}

\author{ {\sc Daniele Del Santo} {\small and} {\sc Martino Prizzi}\\[0.2 cm] {\small Dipartimento di
Matematica e Informatica, Universit\`a di Trieste}\\[-0.1 cm]
{\small Via A.~Valerio 12/1, 34127 Trieste, Italy} }

\date{\today}
\maketitle

\begin{abstract}
Using Bony's paramultiplication we improve a result obtained in \cite{DSP} for operators having coefficients non-Lipschitz-continuous with respect to $t$  
but  ${\mathcal C}^2$  with respect to  $x$, showing that the same result is valid when ${\mathcal C}^2$ regularity is replaced by Lipschitz regularity in $x$.
\end{abstract}




\section{Introduction}

In this note we consider the following backward parabolic operator
\begin{equation} L=\partial_t+ \sum_{i, j = 1}^n \partial_{x_j} (a_{jk} (t,x)
 \partial_{x_k} )+\sum_{ j = 1}^n b_j(t,x)\partial_{x_j}+c(t,x).\label{k1}
\end{equation} We assume that all coefficients are  defined in $[0,T]
\times {\mathbb R}^n_x$,
measurable  and bounded; $( a_{jk}(t,x))_{jk}$ is a real symmetric matrix for
all $(t,x)\in [0,T]\times
{\mathbb R}^n_x$ and there exists $\lambda_0\in (0,1]$ such that
$$
\sum_{j, k = 1}^n  a_{jk} (t,x)
\xi_j \xi_k\geq \lambda_0|\xi|^2\label{k2}
$$ for all $(t,x)\in [0,T]\times  {\mathbb R}^n_x$ and $\xi\in {\mathbb R}^n_\xi$.

Given a functional space ${\cal H}$ (in which it makes sense to look for
the solutions of the
equation $Lu=0$) we say that the operator $L$ has the ${\cal
H}$--uniqueness property if, whenever $u\in
{\cal H}$, $Lu=0$ in $[0,T]\times  {\mathbb R}^n_x$ and
$u(0,x)=0$ in ${\mathbb R}^n_x$, then $u=0$ in $[0,T]\times  {\mathbb R}^n_x$.

We choose ${\mathcal H}$ to be the space of functions 
\begin{equation}
 {\mathcal H}= H^1((0,T), L^2({\mathbb R}^n_x))\cap L^2( (0,T), H^2({\mathbb R}^n_x)).
\end{equation} 
This choice is natural, since 
it follows from elliptic regularity results
(see e.g. Theorem 8.8 in \cite{GT}) that the domain of the operator $-\sum_{j,k = 1}^n \partial_{x_j} (a_{jk} (t,x)
\partial_{x_k} )$ in $L^2(\mathbb R^n)$ is $H^2(\mathbb R^n)$ for all $t\in[0,T]$.

The problem we are interested in is the following:  find the minimal
regularity on the coefficients
$a_{jk}$ ensuring the ${\cal H}$--uniqueness property to $L$.

A classical result of  Lions and Malgrange \cite {LM} (see for related or
more general results
\cite {Mil}, \cite{BT}, \cite{Gh}) shows that a sufficient condition for backward uniqueness 
is given by the assumption that the map $t\mapsto a_{jk}(t,\cdot)$ be Lipschitz continuous
from $[0,T]$ to $L^\infty(\mathbb R^n)$.

On the other hand the well
known example of Miller
\cite{Mil} (where an operator, having coefficients which are
H\"older--continuous of order $1/6$
with respect to $t$ and $C^\infty$ with respect to $x$, does not have the
uniqueness property) shows
that a certain amount of regularity on the $a_{jk}$'s with respect to $t$ is necessary for the
$ {\cal H}$--uniqueness.

In our previous paper \cite{DSP}, we proved
 the $ {\cal
H}$--uniqueness property for
the operator (\ref{k1}) when the coefficients $a_{jk}$ are $C^2$ in the $x$
variables and
non--Lipschitz--continuous in $t$. The regularity in $t$ was given in
terms of a modulus of
continuity $\mu$ satisfying the so called Osgood condition
$$
\int_0^1{1\over \mu(s)}\, ds=+\infty.
$$

This uniqueness result was a consequence of a Carleman estimate in which the
weight function depended
on the modulus of continuity;  such kind of weight functions in Carleman
estimates were
introduced by Tarama \cite {T} in the case of second order elliptic
operators. In obtaining our
Carleman estimate, the integrations by parts, which couldn't be used since the
coefficients were not
Lipschitz--continuous,  was replaced by a microlocal approximation
procedure.


In \cite{DSP}
a technical difficulty in the estimate of a commutator led to imposing on $a$ the the $C^\mu$ regularity with respect to $t$,  together with the 
 ${\mathcal C}^{2}$  regularity with respect to $x$.
In \cite{DS}  this statement was improved, as it was shown that under  the Osgood condition for $\mu$, the $C^\mu$ regularity with respect to $t$,  together with the 
H\"older  ${\mathcal C}^{1,\varepsilon}$  regularity with respect to $x$, is sufficient for the same uniqueness result. 
The proof followed the same pattern as the one in \cite{DSP}, the only difference being in the introduction of a paradifferential operator (actually a simple paramultiplication) in place of the second order part of the operator $ L$. In the present paper, we further improve the result of \cite{DS}, showing that ${\mathcal C}^{1,\varepsilon}$ regularity can be replaced by Lipschitz regularity in $x$.
In order to achieve our result, we introduce a modified paramultiplication. We obtain a Carleman estimate in a space $H^{-s}$, with $0<s<1$, instead of the classical estimate in $L^2$. However, with such estimate we can repeat the arguments of \cite{DSP} and regain the desired uniqueness property.
The estimate of the commutator, in \cite{DS} and in the present  case, is made more effective by a theorem due to Coifman and Meyer \cite[Th. 35]{CMey} (see also, for a similar use of that theorem, \cite[Prop. 3.7]{CMet}).


\section{Definitions and result}

\begin{defn}
A function $\mu$ is said to be a {\it modulus of continuity} if $\mu$ is continuous, concave and strictly increasing on $[ 0,1]$, with $\mu(0)=0$ and $\mu(1)=1$.
Let $I \subseteq {\mathbb R}$ and
let
$\varphi \,:\, I\to {\mathcal B}$, where ${\mathcal B}$ is a Banach space. We say
that $\varphi\in
C^\mu(I,{\mathcal B})$ if $\varphi\in L^\infty(I,{\mathcal B})$ and
$$
\sup_{0<|t-s|<1\atop t,s\in I}{\|\varphi(t)-\varphi (s)\|_{\mathcal
B}\over\mu(|t-s|)}<+\infty.
$$	\label{defmod}
\end{defn}
It is immediate to verify the following properties
\begin{itemize}
\item $\mu(s)\geq s$ for all $s\in [0,1]$;
\item the function $s\mapsto {\mu(s)/s}$ is decreasing on  $\;]0, 1]$;
\item there exists $\lim _{s\to 0^+} {\mu(s)/ s}$;
\item the function $\sigma\mapsto\sigma \mu(1/\sigma)$ is 
increasing on $[1,+\infty[$;
\item the function $\sigma\mapsto 1/(\sigma^2\mu(1/\sigma))$ is 
decreasing on on $[1,+\infty[$.
\end{itemize}

\begin{defn} A modulus of continuity is said to satisfy the {\it Osgood condition} if 
\begin{equation}
\int_0^1{1\over \mu(s)}\, ds=+\infty. \label{osgood}
\end{equation}
\label{defos}
\end{defn}

\begin{thm}
Let $L$ be the operator
\begin{equation} L=\partial_t+ \sum_{j,k = 1}^n \partial_{x_j} (a_{jk} (t,x)
\partial_{x_k} )+\sum_{ j = 1}^n b_j(t,x)\partial_{x_j}+c(t,x),\label{opL}
\end{equation} 
where all the coefficients are supposed to be defined in $[0,T]
\times {\mathbb R}^n_x$, measurable and bounded; let the coefficients
$b_j$ and $c$ be complex valued; let $( a_{jk}(t,x))_{jk}$ be a real
symmetric matrix for all $(t,x)\in [0,T]\times {\mathbb R}^n_x$
and suppose that there exists $\lambda_0\in (0,1)$ such that
\begin{equation}
 \sum_{j, k = 1}^n a_{jk} (t,x)
\xi_j \xi_k\geq \lambda_0|\xi|^2,\label{ell}
\end{equation} 
for all $(t,x)\in [0,T]\times {\mathbb R}^n_x$ and $\xi\in
{\mathbb R}^n_\xi$.
Let ${\mathcal H}$ be the space of functions 
\begin{equation}
 {\mathcal H}= H^1((0,T), L^2({\mathbb R}^n_x))\cap L^2( (0,T), H^2({\mathbb R}^n_x)).\label{H}
\end{equation} 
Let $\mu$ be a modulus of continuity satisfying the Osgood condition.
Suppose that 
\begin{equation}
a_{jk}\in  C^\mu([0,T],L^\infty({\mathbb
R}^n_x))\cap C([0,T],{Lip\,}({\mathbb R}^n_x)),\label{reg}
\end{equation} 
for all $j,k=1\dots,n$.

Then $L$ has the ${\mathcal H}$--uniqueness
property, i.e. if $u\in {\mathcal H}$, $Lu=0$ in $[0,T]\times {\mathbb R}^n_x$ and
$u(0,x)=0$ in ${\mathbb R}^n_x$, then $u=0$ in $[0,T]\times
{\mathbb R}^n_x$.
\label{t1}
\end{thm}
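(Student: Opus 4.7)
The plan is to prove uniqueness via a Carleman estimate with a weight function adapted to the Osgood modulus $\mu$, in the spirit of Tarama and of the authors' earlier work \cite{DSP,DS}. Specifically, I would aim at an inequality of the form
$$
\gamma\int_0^T e^{2\gamma\phi(t)}\|v(t,\cdot)\|^2_{H^{-s}}\,dt \;\leq\; C\int_0^T e^{2\gamma\phi(t)}\|Lv(t,\cdot)\|^2_{H^{-s}}\,dt,
$$
valid for $\gamma$ large and for $v$ supported in a small time interval $[0,\tau]$, where $\phi$ is a concave decreasing function with $\phi(0)=+\infty$, built from $\mu$ (typically $\phi'(t)\sim-1/\mu(t)$, so that $\phi$ is finite at positive times exactly because of Osgood failing to hold at zero). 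Once this is in hand, applying it to a suitable truncation of $u$ and letting $\gamma\to+\infty$ forces $u\equiv 0$ on $[0,\tau]$; iterating the argument on neighbouring intervals covers $[0,T]$.

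The main novelty, compared with \cite{DSP,DS}, is to handle only Lipschitz (not $\mathcal{C}^{1,\varepsilon}$) regularity in $x$. I would first smooth the coefficients in $t$ at a scale depending on the dual variable $\xi$, roughly $\varepsilon(\xi)\sim 1/(|\xi|\mu(1/|\xi|))$, so that the approximation error is controlled in terms of $\mu$; this is exactly the microlocal approximation of \cite{DSP}. Next, I would replace the true second-order operator $\sum_{j,k}\partial_{x_j}(a_{jk}\partial_{x_k})$ by a modified Bony paramultiplication $T_A$ whose symbol is built from the smoothed coefficients. The classical paramultiplication $T_a$ is continuous in Sobolev scales as soon as $a\in L^\infty$, so only the Lipschitz hypothesis on $a_{jk}(t,\cdot)$ is used to retain ellipticity and to control the remainder $\sum\partial_{x_j}(a_{jk}\partial_{x_k})-T_A$ as a zero-order perturbation in appropriate norms. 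To make this remainder harmless, one is led to perform the whole estimate in $H^{-s}$ instead of $L^2$ for a suitable $0<s<1$.

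The main obstacle is the estimate of the commutator arising after conjugation with the weight $e^{\gamma\phi}$ and with $\langle D_x\rangle^{-s}$, between the modified paramultiplication and the weight-dependent multiplier. A naive symbolic calculus bound (for instance Calder\'on--Vaillancourt applied to the symbol difference) is not sharp enough to be absorbed by $\gamma\|v\|^2_{H^{-s}}$ on the left-hand side. Here I would use the theorem of Coifman and Meyer \cite[Th.~35]{CMey}, exactly as suggested in the introduction and as already exploited in \cite{DS} and \cite[Prop.~3.7]{CMet}: this gives the sharper bilinear paraproduct estimate that one needs, and it is precisely the negative-order space $H^{-s}$ that provides the extra room required to close the inequality while keeping the Lipschitz hypothesis on $x$.

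Finally, from the Carleman estimate to $\mathcal{H}$-uniqueness the argument is routine and parallel to \cite{DSP}: given $u\in\mathcal H$ with $Lu=0$ and $u(0,\cdot)=0$, take $v=\chi(t/\tau)u$ with $\chi$ a smooth cutoff equal to $1$ near $0$; then $Lv=[\chi(\cdot/\tau),\partial_t]u$ is supported where $\phi$ is bounded, so the right-hand side of the Carleman inequality stays bounded as $\gamma\to+\infty$, while the left-hand side blows up unless $v\equiv0$ on a neighbourhood of $t=0$. A connectedness / iteration argument then propagates this to all of $[0,T]$. The real content of the proof lies in the construction of the modified paramultiplication and in the Coifman--Meyer-based commutator bound; the passage from Carleman to uniqueness is standard.
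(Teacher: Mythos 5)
Your outline --- a Carleman estimate in $H^{-s}$ with a $\mu$-adapted weight, a modified paramultiplication in place of $\sum_{j,k}\partial_{x_j}(a_{jk}\partial_{x_k}\cdot)$, the Coifman--Meyer theorem for the commutators, time-mollification of the coefficients at a frequency-dependent scale, and a standard deduction of uniqueness --- is indeed the strategy of the paper, but the step that carries the whole proof is missing: the precise choice of the weight and the mechanism by which it absorbs the mollification errors. As written your weight is already inconsistent (a function concave on $(0,\tau]$ is bounded above as $t\to 0^+$, so it cannot satisfy $\phi(0)=+\infty$; and it is the validity of the Osgood condition, not its failure, that makes $\int_0 ds/\mu(s)$ diverge), and, more seriously, a fixed-profile weight $e^{2\gamma\phi(t)}$ cannot close the estimate. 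The paper uses $e^{\frac{2}{\gamma}\Phi(\gamma(T-t))}$ with $\phi(t)=\int_{1/t}^1 ds/\mu(s)$ and $\Phi'=\phi^{-1}$, and everything hinges on the identity (\ref{diffeq}), $\Phi''=(\Phi')^2\mu(1/\Phi')$. After Littlewood--Paley localization, mollifying $a_{jk}$ in $t$ at scale $\varepsilon=2^{-2\nu}$ on the $\nu$-th block (this balances the two error terms; your scale $1/(|\xi|\mu(1/|\xi|))$ does not) produces losses of order $2^{4\nu}\mu(2^{-2\nu})\|v_\nu\|^2_{L^2}$, see (\ref{finalest}); these are dominated by the ellipticity of the paraproduct when $\Phi'(\gamma(T-t))$ is small compared with $2^{2\nu}$, and by the convexity term $\gamma\Phi''(\gamma(T-t))\geq c\,\gamma\, 2^{4\nu}\mu(2^{-2\nu})$ when $\Phi'(\gamma(T-t))$ is comparable to or larger than $2^{2\nu}$ --- and it is exactly (\ref{diffeq}) that furnishes this last bound. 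With a weight $e^{2\gamma\phi(t)}$ the available convexity is only $\gamma|\phi''(t)|$, while in the critical regime $2^{2\nu}\sim\gamma|\phi'(t)|$ the needed lower bound is of order $(\gamma|\phi'(t)|)^2\mu\big(1/(\gamma|\phi'(t)|)\big)$, which for fixed $t$ outgrows $\gamma$ as soon as $\mu(s)/s\to+\infty$ as $s\to 0$, i.e.\ precisely beyond the Lipschitz-in-$t$ case the theorem is meant to surpass. A weight singular at $t=0$ also obstructs your final step: the estimate is proved for compactly supported smooth functions and must then be applied to a cutoff of $u$, which requires the weighted norms to be finite; the paper's weight is bounded on $[0,T/2]$ for each fixed $\gamma$, yours is not unless $u$ is already known to vanish to high order at $t=0$.

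A second, smaller gap: the commutator you single out is not the one that matters, since the weight depends on $t$ only and commutes with every spatial operator. The commutators that require \cite[Th.~35]{CMey} are $[\Delta_\nu,T^m_{a_{jk}}]\partial_{x_k}$, created when the $H^{-s}$ norm is expressed through the blocks $\Delta_\nu$ (Proposition \ref{prop comm}; your mention of $\langle D_x\rangle^{-s}$ gestures at this), and the antisymmetric part $(T^m_{a_{jk,\varepsilon}}-(T^m_{a_{jk,\varepsilon}})^*)\partial_{x_j}$ (Proposition \ref{prop adj}), which is what replaces the integration by parts in $t$ that Lipschitz-in-time coefficients would permit. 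Two further ingredients your plan leaves implicit must be supplied: the positivity of the modified paraproduct for a suitable choice of the cut $m$ (Proposition \ref{prop pos}), and the bound $\|a u-T^m_a u\|_{H^{1-s}}\leq C\|a\|_{Lip}\|u\|_{H^{-s}}$ of (\ref{a-Ta}) --- a gain of one derivative under the Lipschitz hypothesis, not a zero-order remainder --- which is what allows the true operator to be replaced by the paraproduct inside the $H^{-s}$ estimate.
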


\noindent{\bf Remark.} The choice of the space $\mathcal H$ is natural, since 
it follows from elliptic regularity results
(see e.g. Theorem 8.8 in \cite{GT}) that the domain of the operator $-\sum_{j,k = 1}^n \partial_{x_j} (a_{jk} (t,x)
\partial_{x_k} )$ in $L^2(\mathbb R^n)$ is $H^2(\mathbb R^n)$ for all $t\in[0,T]$.


\section{Proof}


\subsection{Modulus of continuity and Carleman estimate}
Theorem \ref{t1} will follow from a Carleman
estimate in Sobolev spaces with negative index. The weight function in the Carleman estimate will be obtained from the modulus of continuity. The crucial idea 
of linking the weight function to the regularity of the coefficients goes back to the paper \cite{T} in which a uniqueness result for elliptic operators with non-Lipschitz-continuous coefficients was proved.  

We define 
$$ \phi(t)=\int_{ 1\over
t}^1{1\over \mu(s)}\; ds. 
$$ 
The function $\phi$ is a strictly
increasing $C^1$ function. From (\ref{osgood}) we have
$\phi([1,+\infty[)=[0,+\infty[$; moreover $\phi'(t)= 1/(t^2\mu(1/t))>0$
for all $t\in [1,+\infty[$. We set 
$$
\Phi(\tau)=\int_0^\tau\phi^{-1}(s)\;ds. 
$$ 
We obtain
$\Phi'(\tau)=\phi^{-1}(\tau)$ and consequently $\lim_{\tau\to
+\infty}\Phi'(\tau)=+\infty$. Moreover
\begin{equation}
\Phi''(\tau)=(\Phi'(\tau))^2\mu({1\over \Phi'(\tau)})
\label{diffeq}
\end{equation} 
for all $\tau\in [0,+\infty[$ and, as the function
$\sigma\mapsto \sigma\mu(1/\sigma)$ 
is increasing on $[1,+\infty[$, we deduce that
\begin{equation}
\lim_{\tau\to +\infty}\Phi''(\tau)=\lim_{\tau\to
+\infty}(\Phi'(\tau))^2\mu({1\over
\Phi'(\tau)})=+\infty.
\label{weightatinfinity}
\end{equation} 
Now we state the Carleman estimate.

\begin{prop} 
For all $s\in (0,1)$, there  exist $\gamma_0$, $C>0$  such that
\begin{equation}
\begin{array}{ll}
\displaystyle{
\int_0^{T\over 2} e^{{2\over
\gamma}\Phi(\gamma(T-t))}\|\partial_t u+ \sum_{j,k = 1}^n \partial_{x_j} (a_{jk} (t,x)
\partial_{x_k} u) \|^2_{H^{-s}}\;
dt}\\[0.5 cm]
\qquad\qquad\qquad\displaystyle{\geq C\gamma^{1\over 2}\int_0^{T\over 2} e^{{2\over
\gamma}\Phi(\gamma(T-t))}
(\|\nabla_x u\|^2_{H^{-s}}+\gamma^{1\over
2}\|u\|^2_{H^{-s}})\; dt,}
\end{array}
\label{Carlest}
\end{equation} for all $\gamma>\gamma_0$ and for all $u\in
C^\infty_0({\mathbb R}^{n+1})$
such that $\,{\rm supp}\, u\subseteq [0, T/2]\times {\mathbb R}^n_x$  (the symbol $\nabla_x f$ denotes the gradient of $f$ with respect to the $x$ variables).
\label{propcarlest}
\end{prop}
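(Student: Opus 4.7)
\smallskip
\noindent\textbf{Proof plan.} The strategy combines the Tarama-type weighted estimate of \cite{DSP} with a paramultiplication replacement of the second-order part, implemented in $H^{-s}$ for $0<s<1$ so as to accommodate the mere Lipschitz regularity in $x$. First I would conjugate by the weight: setting $v(t,x)=e^{(1/\gamma)\Phi(\gamma(T-t))}u(t,x)$ and writing $A(t)u:=\sum_{j,k}\partial_{x_j}(a_{jk}\partial_{x_k}u)$, the estimate (\ref{Carlest}) becomes equivalent to
\begin{equation*}
\int_0^{T/2}\|\partial_t v-\Phi'(\gamma(T-t))\,v+A(t)v\|_{H^{-s}}^2\,dt \ge C\gamma^{1/2}\int_0^{T/2}\bigl(\|\nabla_x v\|_{H^{-s}}^2+\gamma^{1/2}\|v\|_{H^{-s}}^2\bigr)\,dt.
\end{equation*}
I would then introduce a paradifferential operator $P(t)$, built from Bony-type paraproducts in $x$ of the coefficients $a_{jk}(t,\cdot)$ after mollifying them in $t$ at the frequency-dependent scale $\varepsilon\sim1/\Phi'(\gamma(T-t))$ of \cite{T}. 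The modification with respect to \cite{DS} is arranged so that, thanks to the Lipschitz-in-$x$ bound and the Coifman--Meyer theorem \cite[Th.~35]{CMey}, the remainder $A-P$ is continuous from $L^2_tH^{1-s}_x$ into $L^2_tH^{-s}_x$; consequently $(A-P)v$ contributes a term absorbable into $\int\|\nabla_x v\|_{H^{-s}}^2\,dt$ once $\gamma$ is large.

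The core estimate is then for the paradifferentially corrected operator $\mathcal L_\gamma v:=\partial_t v-\Phi'(\gamma(T-t))\,v+P(t)v$. Splitting into the anti-self-adjoint $\partial_t$ and the (approximately $H^{-s}$-self-adjoint) operator $S(t):=P(t)-\Phi'(\gamma(T-t))$ one writes
\begin{equation*}
\|\mathcal L_\gamma v\|_{H^{-s}}^2=\|\partial_t v\|_{H^{-s}}^2+\|Sv\|_{H^{-s}}^2+2\,\mathrm{Re}\,\langle\partial_t v,Sv\rangle_{H^{-s}}.
\end{equation*}
Integration over $[0,T/2]$, whose boundary contributions vanish by the support assumption on $u$, reduces the cross-term to
\begin{equation*}
-\int_0^{T/2}\langle(\dot P+\gamma\Phi''(\gamma(T-t)))v,v\rangle_{H^{-s}}\,dt+\mathcal E,
\end{equation*}
where $\mathcal E$ collects the commutator $[P,\Lambda^{-2s}]$ (with $\Lambda=(1-\Delta)^{1/2}$) measuring the failure of exact $H^{-s}$-self-adjointness of $P$.

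The main obstacle is the control of $\mathcal E$: this is precisely where the Coifman--Meyer theorem becomes indispensable, because for Lipschitz (but not $\mathcal C^{1,\varepsilon}$) coefficients the standard paradifferential remainder loses a full derivative, and it is the passage to $H^{-s}$ with $0<s<1$ that compensates the ensuing loss; using \cite[Th.~35]{CMey} the commutator $[P,\Lambda^{-2s}]$ is shown to be bounded from $H^{1-s}$ into $H^{-s}$ in terms of $\sup\|\nabla_x a_{jk}\|_\infty$, hence absorbable. Granted this, the Tarama mollification in $t$ combined with the $C^\mu$ hypothesis yields
\begin{equation*}
|\langle\dot Pv,v\rangle_{H^{-s}}|\le C(\Phi'(\gamma(T-t)))^2\mu(1/\Phi'(\gamma(T-t)))\,\|v\|_{H^{-s}}^2=C\,\Phi''(\gamma(T-t))\,\|v\|_{H^{-s}}^2
\end{equation*}
by (\ref{diffeq}); hence $\gamma\Phi''+\dot P\ge c\,\gamma\Phi''$ for $\gamma$ large, delivering the coercive $L^2$-type term. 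A G\aa rding-type inequality for $P$ together with the ellipticity (\ref{ell}) finally gives $\|Sv\|_{H^{-s}}^2\ge c\lambda_0\|\nabla_x v\|_{H^{-s}}^2$ modulo terms absorbable into $\gamma\Phi''\|v\|_{H^{-s}}^2$. The factors $\gamma^{1/2}$ and $\gamma$ appearing in (\ref{Carlest}) emerge from standard weight-size book-keeping, using that $\Phi''(\gamma(T-t))\to+\infty$ by (\ref{weightatinfinity}); undoing the conjugation concludes the proof.
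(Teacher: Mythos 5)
Your overall skeleton (conjugation by $e^{\Phi(\gamma(T-t))/\gamma}$, replacement of $a_{jk}$ by a paraproduct with remainder bounded from $H^{1-s}$ into $H^{-s}$, mollification of the coefficients in $t$, Coifman--Meyer for the commutator/adjoint defects, and the differential equation (\ref{diffeq}) for the weight) is the same as in the paper, but there is a genuine gap at the central step. Your key claimed bound $|\langle\dot P v,v\rangle_{H^{-s}}|\le C\,(\Phi')^2\mu(1/\Phi')\,\|v\|_{H^{-s}}^2=C\,\Phi''\,\|v\|_{H^{-s}}^2$ cannot hold as stated: $\dot P$ is a \emph{second order} operator in $x$, so the natural estimate (via (\ref{estT_a}) and (\ref{a'_eps})) is of the form $\frac{\mu(\varepsilon)}{\varepsilon}\|\nabla_x v\|_{H^{-s}}^2$, i.e. $\Phi'\mu(1/\Phi')\|\nabla_x v\|_{H^{-s}}^2$ with your choice $\varepsilon\sim 1/\Phi'$. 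On frequencies $|\xi|\gg\Phi'(\gamma(T-t))$ this exceeds $\Phi''\|v\|_{H^{-s}}^2$ by a factor $|\xi|^2/(\Phi')^2$ and can only be beaten by the ellipticity term, not by $\gamma\Phi''\|v\|^2$; this forces a frequency-by-frequency comparison between $\Phi'(\gamma(T-t))$ and the size of the frequency, which your argument never performs. This is exactly what the paper does: it applies $\Delta_\nu$ to the equation, uses Proposition \ref{prop comm} (Coifman--Meyer) to control $\partial_{x_j}([\Delta_\nu,T^m_{a_{jk}}]\partial_{x_k}v)$, works blockwise in $L^2$ where Bernstein converts derivatives into factors $2^\nu$, chooses the mollification scale \emph{per block}, $\varepsilon=2^{-2\nu}$ (not a global $t$-dependent scale), and then splits into the two cases $\Phi'\le\frac{\lambda_0}{8}2^{2\nu}$, where the positivity of $T^m_a$ makes the square term dominate the error $K2^{4\nu}\mu(2^{-2\nu})$, and $\Phi'\ge\frac{\lambda_0}{16}2^{2\nu}$, where (\ref{diffeq}) gives $\gamma\Phi''\ge c\,\gamma\,2^{4\nu}\mu(2^{-2\nu})$. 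Without this dichotomy (or an equivalent microlocal splitting) the absorption you assert is simply not available.

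Two further points. First, your time-dependent scale $\varepsilon(t)\sim 1/\Phi'(\gamma(T-t))$ makes $\dot P$ contain an extra term coming from $\partial_t\varepsilon$ (of size $\gamma\Phi''\mu(\varepsilon)/(\varepsilon(\Phi')^2)$ acting at second order), which you never estimate; the paper avoids this entirely because its $\varepsilon=2^{-2\nu}$ does not depend on $t$. Second, you misplace the role of the modification of the paraproduct: the remainder bound for $a-T^m_a$ (estimate (\ref{a-Ta})) holds for every $m$, while the parameter $m$ must be chosen, via Proposition \ref{prop pos}, to restore the positivity of $T^m_{a}$ for the positive symmetric matrix $(a_{jk})$ (plain Bony paraproducts of a positive function need not be positive); this is precisely the G\aa rding-type inequality you invoke, and the failure of self-adjointness is handled not through $[P,\Lambda^{-2s}]$ but, after the dyadic reduction to $L^2$ pairings, through the adjoint estimate of Proposition \ref{prop adj}. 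These are fixable in principle, but as written the proposal hides the main difficulty of the proof rather than resolving it.
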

The way of obtaining the ${\mathcal H}$-uniqueness from the inequality (\ref{Carlest}) is a standard procedure, the details of which, in the case of a Carleman estimate in $L^2$, can be found in \cite[Par. 3.4]{DSP}. 



\subsection{Paraproducts}


\subsubsection{Littlewood-Paley decomposition}

We review some known results on Littlewood-Paley decomposition and related topics. More can be found in \cite{B},  \cite[Ch. 4 and Ch. 5]{M} and \cite[\hbox{Par. 3}]{CMet}.

Let $\chi\in C^\infty_0({\mathbb R})$, $0\leq \chi\leq 1$, even and such that
$\chi(s)=1$ for $|s|\leq 11/10$ and $\chi(s)=0$ for $|s|\geq 19/10$. For $k\in{\mathbb Z}$ and $\xi\in{\mathbb R}^n$, let us consider $\chi_k(\xi)=\chi(2^{-k}|\xi|)$, let's denote $\tilde \chi_k(x)$ its inverse Fourier transform and let's define the operators
$$
\begin{array}{c}
\displaystyle{S_{-1}u=0,\quad{\rm\  and}\ \  S_k u= \tilde\chi_k * u= \chi_k(D_x)u,}\\[0.3 cm]
\displaystyle{\Delta_0 u=S_0u,\quad{\rm\  and,\  for}\ \  k\geq 1, \ \ \Delta_ku=S_ku-S_{k-1}u.}
\end{array}
$$
In the following propositions we recall the characterization of Sobolev spaces and Lipschitz-continuos functions via Littlewood-Paley decomposition (see \cite[Prop. 4.1.11]{M}, \cite[Prop. 3.1 and Prop. 3.2]{CMet}  and \cite[Lemma 3.2]{GR}.

\begin{prop}
Let $s\in {\mathbb R}$. A temperate distribution $u$ is in $H^s$ if and only if the following two conditions hold
\begin{itemize}
\item [{i)}] for all $k\geq 0$, $\Delta_ku\in L^2$;
\item [ii)] the sequence $(\delta_k)_k$, with $\delta_k= 2^{ks}\|\Delta_k u\|_{L^2}$, is in $l^2$.
\end{itemize}

Moreover there exists $C_s\geq 1$ such that, for all $u\in H^s$,
$$
{1\over C_s}\|u\|_{H^s}\leq \big(\sum_{k=0}^{+\infty} \delta_k^2\big)^{1/2} \leq C_s\|u\|_{H^s}.
$$
\label{prop car sob}
\end{prop}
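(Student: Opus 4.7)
The plan is to reduce everything to a Plancherel computation in frequency space and exploit the almost-disjoint dyadic supports of the $\hat{\Delta_k u}$. Recall that $\|u\|_{H^s}^2 = \int_{{\mathbb R}^n}(1+|\xi|^2)^s |\hat u(\xi)|^2\,d\xi$. The main geometric facts I need about the cut-offs $\chi_k(\xi)=\chi(2^{-k}|\xi|)$ are:
\begin{itemize}
\item $\mathrm{supp}\,\widehat{\Delta_0 u}\subseteq\{|\xi|\leq 19/10\}$, and for $k\geq 1$, $\mathrm{supp}\,\widehat{\Delta_k u}\subseteq A_k:=\{2^{k-1}\cdot 11/10\leq|\xi|\leq 2^k\cdot 19/10\}$, so on this support $(1+|\xi|^2)^s$ is comparable to $2^{2ks}$ with constants depending only on $s$;
\item the annuli $A_k$ have bounded overlap: there is an integer $N_0$ (independent of $\xi$) such that at most $N_0$ of the functions $(\chi_k-\chi_{k-1})$ are nonzero at any given $\xi$, since $19/11<2^2$.
\end{itemize}

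For the upper bound $(\sum_k \delta_k^2)^{1/2}\leq C_s\|u\|_{H^s}$, I use that each $\widehat{\Delta_k u}=(\chi_k-\chi_{k-1})\hat u$ (with $\chi_{-1}\equiv 0$) is pointwise bounded by $|\hat u|$ on its dyadic annulus. By Plancherel,
$$
\sum_{k=0}^{+\infty} 2^{2ks}\|\Delta_k u\|_{L^2}^2 \leq C'_s \sum_{k=0}^{+\infty}\int_{A_k}(1+|\xi|^2)^s|\hat u(\xi)|^2\,d\xi \leq C'_s\, N_0\,\|u\|_{H^s}^2,
$$
where in the last step I invoke the bounded-overlap property to collapse the sum of indicator functions to a constant. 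The low-frequency block $k=0$ is handled separately using that $(1+|\xi|^2)^s$ is bounded below and above on $|\xi|\leq 19/10$.

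For the lower bound and the ``if'' direction, I use the telescoping identity $\chi_0+\sum_{k=1}^N(\chi_k-\chi_{k-1})=\chi_N$, hence $\chi_N(\xi)\hat u(\xi)\to\hat u(\xi)$ pointwise, so formally $\hat u=\sum_k \widehat{\Delta_k u}$ with only $N_0$ nonzero terms at any $\xi$. By Cauchy–Schwarz applied at each frequency,
$$
|\hat u(\xi)|^2 \leq N_0 \sum_{k=0}^{+\infty}|\widehat{\Delta_k u}(\xi)|^2.
$$
Multiplying by $(1+|\xi|^2)^s$, integrating, and using the equivalence $(1+|\xi|^2)^s\asymp 2^{2ks}$ on the support of $\widehat{\Delta_k u}$ gives
$$
\|u\|_{H^s}^2 \leq C''_s\, N_0 \sum_{k=0}^{+\infty} 2^{2ks}\|\Delta_k u\|_{L^2}^2,
$$
provided the right-hand side is finite, in which case the series $\sum_k\Delta_k u$ converges in $H^s$ and hence in $\mathcal S'$ to $u$, showing that $u\in H^s$.

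I do not expect any serious obstacle. The only mild subtlety is the uniform-in-$k$ control of $(1+|\xi|^2)^s\asymp 2^{2ks}$ on $A_k$ (which fails on the low-frequency ball, so $k=0$ requires a separate, trivial estimate), and justifying that $\sum_k \Delta_k u$ converges to $u$ in $\mathcal S'$ (which follows from $\chi_N(D_x)u\to u$ in $\mathcal S'$ because $\chi_N\to 1$ boundedly). Combining the two inequalities yields the asserted equivalence with $C_s=\max(C'_s,C''_s)^{1/2}(N_0)^{1/2}\geq 1$.
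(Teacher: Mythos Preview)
Your argument is correct and is the standard Plancherel-based proof of the dyadic characterization of $H^s$; the only places one might ask for a word more are the convergence of $\sum_k\Delta_k u$ in $H^s$ under hypotheses (i)--(ii), which follows by showing the partial sums $S_N u$ are Cauchy in $H^s$ via the same bounded-overlap estimate and then identifying the $H^s$-limit with $u$ through $S_N u\to u$ in $\mathcal S'$, and the pointwise bound $|\chi_k-\chi_{k-1}|\le 1$, which holds simply because both $\chi_k,\chi_{k-1}$ take values in $[0,1]$.

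As for comparison with the paper: there is nothing to compare. The paper does not prove this proposition; it quotes it as a known fact with references to \cite[Prop.~4.1.11]{M}, \cite[Prop.~3.1 and 3.2]{CMet}, and \cite[Lemma~3.2]{GR}. Your write-up supplies exactly the kind of self-contained verification those references contain.
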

\begin{prop}
Let $s\in {\mathbb R}$ and $R>0$. Let $(u_k)_k$ a sequence of functions in $L^2$
such that
\begin{itemize}
\item [{i)}] the support of the Fourier transform of $u_0$ is contained in $\{|\xi|\leq R\}$ and the support of  Fourier transform  of $u_k$ is contained in $\{{1\over R} \,2^k\leq |\xi|\leq R\, 2^k\}$, for all $k\geq 1$;
\item [ii)] the sequence $(\delta_k)_k$, with $\delta_k= 2^{ks}\|u_k\|_{L^2}$, is in $l^2$.
\end{itemize}
Then the series $\sum_k u_k $ is converging, with sum $u$,  in $H^s$ and the norm of $u$ in $H^s$ is equivalent to the norm of $(\delta_k)_k$ in $l^2$. 

When $s>0$ it is sufficient to assume the 
Fourier transform of $u_k$ to be contained in $\{|\xi|\leq R 2^k\}$, for all $k\geq 1$.
\label{prop car sob 2}
\end{prop}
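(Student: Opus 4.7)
The plan is to invoke Proposition \ref{prop car sob} and reduce to showing that the sequence $\bigl(2^{js}\|\Delta_j u\|_{L^2}\bigr)_j$ lies in $\ell^2$ with norm controlled by $\|(\delta_k)\|_{\ell^2}$. The main leverage is an almost-orthogonality property: given the Fourier support condition (i), the operator $\Delta_j$ annihilates $u_k$ unless $|j-k|$ is bounded by a constant depending only on $R$ and the cutoff $\chi$.

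First I would work with the partial sums $v_N=\sum_{k=0}^N u_k$. In the annular case there exists $N_0=N_0(R,\chi)\in \mathbb{N}$ such that $\Delta_j u_k=0$ whenever $|j-k|>N_0$, since the Fourier support of $\Delta_j u_k$ lies in the intersection of two dyadic annuli of sizes $\sim 2^j$ and $\sim 2^k$. Hence
$$
\Delta_j v_N=\sum_{\substack{0\leq k\leq N\\ |k-j|\leq N_0}}\Delta_j u_k.
$$
Because $\Delta_j$ is bounded on $L^2$ uniformly in $j$, and $2^{js}\leq C_s 2^{ks}$ for $|j-k|\leq N_0$, the triangle inequality and Young's convolution inequality on $\ell^2(\mathbb{Z})$ (with the finitely supported kernel $a_m=\mathbf{1}_{|m|\leq N_0}$) give
$$
\Bigl\|\bigl(2^{js}\|\Delta_j v_N\|_{L^2}\bigr)_j\Bigr\|_{\ell^2}\leq C\|a\|_{\ell^1}\|(\delta_k)\|_{\ell^2},
$$
with $C$ independent of $N$. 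Applying the same estimate to $v_N-v_M$ shows, via Proposition \ref{prop car sob}, that $(v_N)$ is Cauchy in $H^s$ and therefore converges to some $u\in H^s$ satisfying the announced bound. The reverse inequality comes for free by applying Proposition \ref{prop car sob} to the limit $u$: the decomposition $u_k=\Delta_k u$ satisfies (i) and yields $\|(2^{ks}\|\Delta_k u\|_{L^2})_k\|_{\ell^2}\leq C\|u\|_{H^s}$, so the two norms are equivalent.

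For the $s>0$ variant with Fourier supports contained in balls, the same analysis applies, except that now $\Delta_j u_k=0$ only when $j>k+N_0$; no lower bound on $k-j$ is available because $u_k$ may have low-frequency content. The convolution kernel therefore becomes $a_m=2^{-ms}\mathbf{1}_{m\geq -N_0}$, which arises from the split $2^{js}=2^{(j-k)s}\cdot 2^{ks}$ with $j-k\leq N_0$. The indices $k\leq j$ contribute a bounded factor $2^{(j-k)s}\leq 2^{N_0s}$, while those with $k>j$ contribute $2^{-(k-j)s}$, and the resulting series $\sum_m |a_m|$ is summable precisely because $s>0$. Young's inequality then closes the argument as before.

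The main obstacle, and truly the only delicate point, is the verification that $N_0$ depends only on $R$ and the fixed cutoff $\chi$, together with the slight low-frequency asymmetry at $k=0$ and $j=0$ (since $\Delta_0=S_0$ has ball-type, not annular, Fourier support). Once this bookkeeping is in place, the entire statement reduces to the elementary Young convolution estimate above; the Osgood modulus $\mu$ of Section 3.1 plays no role here, as this is a purely harmonic-analytic fact.
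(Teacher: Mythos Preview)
The paper does not prove this proposition; it is quoted as a known result with references to \cite{B}, \cite[Ch.~4--5]{M} and \cite[Par.~3]{CMet}. Your argument for the bound $\|u\|_{H^s}\leq C\|(\delta_k)\|_{\ell^2}$ is the standard one and is correct: the finite overlap of dyadic annuli gives $\Delta_j u_k=0$ for $|j-k|>N_0(R,\chi)$, and the rest is Young's inequality on $\ell^2$. The treatment of the $s>0$ ball-supported case is likewise correct.

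One small point: your justification of the reverse inequality does not work. You invoke Proposition~\ref{prop car sob} for the pieces $\Delta_k u$, but that bounds $\bigl(2^{ks}\|\Delta_k u\|_{L^2}\bigr)_k$, not the sequence $\delta_k=2^{ks}\|u_k\|_{L^2}$ attached to the \emph{given} decomposition. In fact the inequality $\|(\delta_k)\|_{\ell^2}\leq C\|u\|_{H^s}$ is false for arbitrary decompositions satisfying (i): for $R>\sqrt2$ one can take $u_1=f$, $u_2=-f$ with $\hat f$ supported in the overlap of the two annuli, so that $u=0$ while $\delta_1,\delta_2\neq0$. The word ``equivalent'' in the statement should therefore be read as the one-sided estimate, which is the only direction the paper ever uses (in the proofs of Propositions~\ref{prop paraprod} and~\ref{prop adj}).
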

\begin{prop}
 A bounded function $a$ is in $Lip$, the space of bounded Lipschitz continuous functions defined on ${\mathbb R}^N_x$,   if and only if  
 $$
 \sup_{k\in {\mathbb N}}\|\nabla_x(S_ka)\|_{L^\infty}<+\infty.
 $$

Moreover there exists $C>0$ such that if $a\in Lip$, then 
$$
\|\Delta_k a\|_{L^\infty} \leq C\|a\|_{Lip}\, 2^{-k} \quad{\it and}\quad
\|\nabla_x(S_ka)\|_{L^\infty}\leq   C\|a\|_{Lip}
$$ 
(where $\|f\|_{Lip}=\|f\|_{L^\infty}+\|\nabla_x f\|_{L^\infty}$).
\label{prop car lip}
\end{prop}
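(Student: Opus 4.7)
The statement splits naturally into three claims: (i) for $a\in Lip$, both quantitative estimates hold; (ii) the qualitative bound $\sup_k\|\nabla_x(S_ka)\|_{L^\infty}<+\infty$ is an immediate consequence of (i); (iii) conversely, boundedness of $a$ together with $\sup_k\|\nabla_x(S_ka)\|_{L^\infty}<+\infty$ forces $a\in Lip$. Parts (i)--(ii) are routine Fourier-multiplier estimates using that $S_k$ and $\Delta_k$ are convolution operators with scaled kernels; the converse (iii) is a limiting argument based on the fact that $S_ka$ is a smooth approximation of $a$.

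\textbf{Forward direction.} Write $S_ka=\tilde\chi_k*a$ with $\tilde\chi_k(x)=2^{kn}\tilde\chi_0(2^kx)$, so that $\|\tilde\chi_k\|_{L^1}=\|\tilde\chi_0\|_{L^1}=:C_1$ independently of $k$. Since $\partial_j(S_ka)=\tilde\chi_k*\partial_j a$, Young's inequality gives
$$
\|\partial_j(S_ka)\|_{L^\infty}\le C_1\|\partial_j a\|_{L^\infty}\le C_1\|a\|_{Lip},
$$
which is the second quantitative bound. For the first, consider $k\ge 1$. The symbol $\psi_k(\xi):=\chi(2^{-k}\xi)-\chi(2^{-k+1}\xi)=\psi(2^{-k}\xi)$ with $\psi:=\chi-\chi(\cdot/2)$ vanishes on a neighbourhood of the origin, so the functions $\eta_j(\xi):=-i\xi_j\psi(\xi)/|\xi|^2$ belong to $C^\infty_0$ and satisfy $\psi(\xi)=\sum_{j=1}^n i\xi_j\eta_j(\xi)$. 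Scaling gives $\psi_k(\xi)\hat a(\xi)=2^{-k}\sum_j\eta_j(2^{-k}\xi)\,\widehat{\partial_j a}(\xi)$, whence
$$
\Delta_k a=2^{-k}\sum_{j=1}^n \kappa_j^k*\partial_j a,
$$
with $\kappa_j^k(x)=2^{kn}\tilde\eta_j(2^kx)$ and $\|\kappa_j^k\|_{L^1}=\|\tilde\eta_j\|_{L^1}$ independent of $k$. Young's inequality then yields $\|\Delta_k a\|_{L^\infty}\le C\,2^{-k}\|a\|_{Lip}$. For $k=0$, $\Delta_0 a=S_0 a=\tilde\chi_0*a$ and $\|\Delta_0 a\|_{L^\infty}\le C_1\|a\|_{L^\infty}$, which is absorbed into the same estimate.

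\textbf{Converse direction.} Suppose $a\in L^\infty$ and $M:=\sup_k\|\nabla_x(S_ka)\|_{L^\infty}<+\infty$. Each $S_ka$ is smooth, so the mean-value theorem gives
$$
|S_ka(x+h)-S_ka(x)|\le M|h|\qquad\text{for all }x,h\in{\mathbb R}^n\text{ and all }k.
$$
On the other hand $\tilde\chi_k$ is an approximate identity, since $\int\tilde\chi_0\,dx=\chi(0)=1$ and $\tilde\chi_0\in\mathcal S$, so $S_ka(x)\to a(x)$ at every Lebesgue point of $a$, hence a.e. Passing to the limit at pairs of Lebesgue points yields $|a(x+h)-a(x)|\le M|h|$ for a.e. $x,h$, so $a$ coincides almost everywhere with a Lipschitz function of constant at most $M$; modifying $a$ on a null set, $a\in Lip$ with $\|\nabla_x a\|_{L^\infty}\le M$.

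\textbf{Main obstacle.} The only non-routine step is the factorisation of the annular multiplier $\psi_k$ as $\sum_j i\xi_j\,\eta_j(2^{-k}\xi)$; the gain of $2^{-k}$ in the bound on $\|\Delta_k a\|_{L^\infty}$ rests on the compatibility of this factorisation with the uniform $L^1$-bound on the rescaled kernels $\kappa_j^k$, which in turn relies on the spectral gap at the origin inherited from the choice of $\chi$.
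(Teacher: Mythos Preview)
Your argument is correct, modulo a harmless slip: since $\Delta_k=S_k-S_{k-1}$, the annular cutoff should be $\psi=\chi-\chi(2\,\cdot)$ rather than $\chi-\chi(\cdot/2)$; this does not affect the rest, as your $\psi$ is still smooth, compactly supported and vanishing near the origin, so the factorisation $\psi(\xi)=\sum_j i\xi_j\eta_j(\xi)$ with $\eta_j\in C^\infty_0$ goes through unchanged.

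As for comparison with the paper: there is nothing to compare. The paper does not prove this proposition; it is stated as a recalled fact with references to \cite[Prop.~4.1.11]{M}, \cite[Prop.~3.1 and 3.2]{CMet} and \cite[Lemma~3.2]{GR}. Your proof is precisely the standard one that appears in those sources: Young's inequality for the uniform bound on $\nabla_x(S_ka)$, the ``divide by $|\xi|^2$ on the annulus'' trick to extract the factor $2^{-k}$ in $\|\Delta_k a\|_{L^\infty}$, and the approximate-identity limit for the converse.
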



\subsubsection{Bony's modified paraproduct}

Let $a\in L^\infty$. The  Bony's paraproduct  of $a$ and $u\in H^s$ (see \cite[Par. 2]{B}) is defined as
$$
T_au=\sum_{k=3}^{+\infty} S_{k-3} a\Delta_k u.
$$
We modify the definition of paraproduct introducing the following operator
$$
T^m_au=S_{m-1}aS_{m+1}u +\sum_{k=m+2}^{+\infty} S_{k-3} a\Delta_k u.
$$
where $m\in {\mathbb N}$ (remark that $T_a=T_a^0$).
Useful properties of the  (modified) paraproduct are contained in the following propositions (see also \cite[Prop. 5.2.1]{M}, \cite[Prop. 3.4]{CMet}).
\begin{prop}
Let $m\in {\mathbb N}$,  $s\in {\mathbb R}$ and $a\in L^\infty$. 

Then $T^m_a$ maps $H^s$ into $H^s$ and
\begin{equation}
\label{estT_a}
\|T^m_au\|_{H^s}\leq C_{m, s} \|a\|_{L^\infty} \|u\|_{H^s}.
\end{equation}

Let $m\in {\mathbb N}$, $s\in (0,1)$ and $a\in Lip$. 

Then $u \mapsto au  -T^m_a u$ maps $H^{-s}$ into $H^{1-s}$  and 
\begin{equation}
\label{a-Ta}
\|au-T^m_au\|_{H^{1-s}}\leq C_{ m, s} \|a\|_{Lip} \|u\|_{H^{-s}}.
\end{equation}

\label {prop paraprod}
\end{prop}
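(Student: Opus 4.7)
The argument rests on Littlewood--Paley technology: I will write both $T^m_a u$ and the remainder $au-T^m_a u$ as series of dyadic pieces with controlled frequency localization and then apply the Sobolev characterizations of Propositions~\ref{prop car sob} and~\ref{prop car sob 2}. The two identities $a=S_{m-1}a+\sum_{l\geq m}\Delta_l a$ and $u=S_{m+1}u+\sum_{k\geq m+2}\Delta_k u$, together with the Lipschitz characterization of Proposition~\ref{prop car lip}, are essentially the only ingredients.

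For (\ref{estT_a}), I split $T^m_a u$ as
$$T^m_a u=S_{m-1}a\,S_{m+1}u+\sum_{k\geq m+2}S_{k-3}a\,\Delta_k u.$$
The first summand has Fourier support in a ball of radius of order $2^m$; by Proposition~\ref{prop car sob} its $H^s$ norm is comparable, up to a constant depending only on $m$ and $s$, to its $L^2$ norm, which in turn is controlled by $\|a\|_{L^\infty}\|S_{m+1}u\|_{L^2}\leq C_{m,s}\|a\|_{L^\infty}\|u\|_{H^s}$. Each piece $S_{k-3}a\,\Delta_k u$ of the tail has Fourier transform supported in a dyadic annulus of size $2^k$, since $S_{k-3}a$ lives at frequencies of order at most $2^{k-3}$ while $\Delta_k u$ is localized in an annulus of size $2^k$. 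Combining the pointwise bound $\|S_{k-3}a\|_{L^\infty}\leq\|a\|_{L^\infty}$ with Proposition~\ref{prop car sob 2} then yields (\ref{estT_a}).

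For (\ref{a-Ta}), plugging the two dyadic expansions of $a$ and $u$ into $au$ and isolating the pieces that make up $T^m_a u$ produces the decomposition
$$au-T^m_a u=R_1+R_2,\qquad R_1:=\sum_{l\geq m}\Delta_l a\,S_{m+1}u,\quad R_2:=\sum_{k\geq m+2}\sum_{l\geq k-2}\Delta_l a\,\Delta_k u.$$
Here Proposition~\ref{prop car lip} furnishes the crucial gain $\|\Delta_l a\|_{L^\infty}\leq C\|a\|_{Lip}\,2^{-l}$. Each piece of $R_1$ has Fourier support in a ball of radius of order $2^l$, and weighting the resulting $L^2$ bound by $2^{l(1-s)}$, as required by the ball--support form of Proposition~\ref{prop car sob 2} (applicable since $1-s>0$), produces a factor $2^{-ls}$ that is summable in $l$ precisely because $s>0$. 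For $R_2$ I swap the order of summation to rewrite $R_2=\sum_{l\geq m}\Delta_l a\,\tilde v_l$ with $\tilde v_l:=\sum_{k=m+2}^{l+2}\Delta_k u$, again spectrally supported in a ball of radius of order $2^l$; estimating $\|\tilde v_l\|_{L^2}^2$ through the $\ell^2$ sequence $c_k:=2^{-ks}\|\Delta_k u\|_{L^2}$ associated to $u\in H^{-s}$ and then exchanging the resulting double sum by a Young-type argument gives the required bound.

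The only delicate step is the treatment of $R_2$: the exchange of sums relies simultaneously on $s>0$ (to sum the geometric series $\sum_{l\geq k-2}2^{-2ls}$) and on $1-s>0$ (so that the ball--support form of Proposition~\ref{prop car sob 2} applies), which together pin down the restriction $s\in(0,1)$. Everything else is routine bookkeeping on frequency supports together with Cauchy--Schwarz.
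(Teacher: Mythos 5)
Your proof is correct, and it runs on the same engine as the paper's: the dyadic characterizations of Propositions \ref{prop car sob} and \ref{prop car sob 2}, the decay $\|\Delta_k a\|_{L^\infty}\leq C\|a\|_{Lip}2^{-k}$ from Proposition \ref{prop car lip}, and an $\ell^1\ast\ell^2$ Young-type convolution bound, which is exactly the paper's $\tilde\delta_k$ device. The only genuine divergence is the bookkeeping of the remainder $au-T^m_au$: the paper uses the Bony-type split into the reverse paraproduct $\sum_{k\geq\max\{3,m\}}\Delta_k a\,S_{k-3}u$, whose terms are localized in dyadic annuli (so the annulus form of Proposition \ref{prop car sob 2} applies to that piece), plus the resonant sum $\sum_{k\geq m}\sum_{|j-k|\leq 2}\Delta_k a\,\Delta_j u$, which is only ball-localized; you instead peel off $R_1=\sum_{l\geq m}\Delta_l a\,S_{m+1}u$ and regroup everything else by the frequency of $a$ via the sum swap $R_2=\sum_{l\geq m}\Delta_l a\,\tilde v_l$, so that all of your pieces are merely ball-localized and you invoke the ball form of Proposition \ref{prop car sob 2} throughout. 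This costs nothing, since $1-s>0$ is needed anyway for the resonant part in the paper's argument, and it buys a slightly more uniform treatment (one localization criterion and one convolution estimate instead of two differently localized sums). You also sketch the first estimate (\ref{estT_a}), which the paper omits as standard; your sketch (the low block controlled crudely through its compact spectrum with constants depending on $m$ and $s$, the tail through the annulus criterion together with $\|S_{k-3}a\|_{L^\infty}\leq\|a\|_{L^\infty}$) is the usual argument and is valid for every $s\in\mathbb{R}$.
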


\begin {proof}
We prove only the second part of the statement. We have
$$
au-T^m_au= \sum_{k=\max\{3, m\}}^{+\infty}\Delta_k a S_{k-3}u +\sum_{k=m}^{+\infty} (\sum_{{j\geq 0 \atop |j-k|\leq 2}}\Delta_k a\Delta_j u).
$$
We remark that the support of the Fourier transform of $\Delta_k a S_{k-3}u$ is contained in $\{2^{k-2} \leq |\xi|\leq 2^{k+2}\}$. Moreover, by Proposition \ref{prop car lip},
we have that
$$
\|\Delta_k a S_{k-3}u\|_{L^2} \leq \|\Delta_k a\|_{L^\infty}\| S_{k-3}u\|_{L^2}
\leq C   \| a\|_{Lip}\, 2^{-k}\, \sum_{j=0}^{k-3}2^{js}\delta_j
$$
where $\delta_j=2^{-js}\|\Delta_ju\|_{L^2}$. From Proposition \ref{prop car sob} we have  that $(\delta_j)_j\in l^2$ and its $l^2$ norm is equivalent to the $H^{-s}$ norm of $u$. On the other hand, setting
$$
\tilde \delta_k= \sum_{j=0}^{k}2^{(j-k)s}\delta_j,
$$
we have that $(\tilde \delta_k)_k\in l^2$ and $\|(\tilde \delta_k)_k\|_{l^2}\leq C_s 
\|( \delta_k)_k\|_{l^2}$. Consequently
$$
\|\Delta_k a S_{k-3}u\|_{L^2} \leq C_s  \| a\|_{Lip} 2^{-k(1-s)}\tilde \delta_k,
$$
and then, by Proposition \ref{prop car sob 2} we have that $\sum_{k=\max\{3, m \}}^{+\infty}\Delta_k a S_{k-3}u \in H^{1-s}$ with 
\begin{equation}
\|\sum_{k=\max\{3, m\}}^{+\infty}\Delta_k a S_{k-3}u \|_{H^{1-s}}\leq C_{m, s}  \| a\|_{Lip} \|u\|_{H^{1-s}}.
\label {a}
\end{equation}

Next, we see that, for $k\geq 2$,
$$
\sum_{k=m}^{+\infty} (\sum_{{j\geq 0 \atop |j-k|\leq 2}}\Delta_k a\Delta_j u)
= \sum_{k=m}^{+\infty} \Delta_{k} a \Delta_{k-2} u +\dots + \sum_{k=m}^{+\infty} \Delta_{k} a \Delta_{k+2} u,
$$
with a slight modification in the case $k=0,\ 1$.
We have that the support of the Fourier transform of $\Delta_k a \Delta_{k-2} u$ is contained in $\{|\xi|\leq 2^{k+2}\}$ and similarly for the other four terms, e.g. the support of the Fourier transform of $\Delta_k a \Delta_{k+2} u$ is contained in $\{|\xi|\leq 2^{k+4}\}$.
Moreover
$$
\|\Delta_k a  \Delta_{k-2} u\|_{L^2}\leq \|\Delta_k a\|_{L^\infty}\|  \Delta_{k-2} u\|_{L^2}
\leq C_s \| a\|_{Lip}\,2^{-k(1-s)} \delta_{k-2}.
$$
Again from Proposition \ref{prop car sob 2} we have that 
$\sum_{k=m}^{+\infty} \Delta_k a \Delta_{k-2} u\in H^{1-s}$ and
$$
\|\sum_{k=m}^{+\infty} \Delta_{k-2} a \Delta_k u\|_{H^{1-s}}\leq C_{m, s} \| a\|_{Lip}\, \|u\|_{H^{-s}}.
$$
Arguing similarly for the other terms we have that $\sum_{k=m}^{+\infty} (\sum_{{j\geq 0 \atop |j-k|\leq 2}}\Delta_k a\Delta_j u)\in  H^{1-s}$ and
\begin{equation}
 \|\sum_{k=m}^{+\infty} (\sum_{{j\geq 0 \atop |j-k|\leq 2}}\Delta_k a\Delta_j u)\|_{
 H^{1-s}}\leq C_{m.s} \| a\|_{Lip} \,\|u\|_{H^{-s}}.
 \label{b}
\end{equation}
The conclusion of the proof of the proposition is reached  putting together (\ref{a}) and (\ref{b}).
\end{proof}

As pointed out in \cite[Par. 3]{CMet}, the positivity of the function $a$ does not imply, for all $m\geq 0$, the positivity of $T^m_a$. Nevertheless the following proposition holds (see \cite[Cor. 3.12]{CMet}).

\begin{prop}
Let $a\in Lip$
and suppose that $a(x)\geq \lambda_0>0$ for all $x\in{\mathbb R}^n$. Then there exists $m$ depending on $\lambda_0$ and $\|a\|
_{Lip}$ such that
\begin{equation}
\label{pos}
{\rm Re}\big(T^m_au, u\big)_{L^2}\geq {\lambda_0\over 2}\|u\|_{L^2},
\end{equation}
for all $u\in L^2$ (here $\big(\cdot, \cdot)_{L^2}$ denotes the scalar product in $L^2$).
A similar result is valid for vector valued functions when  $a$ is replaced by a  positive symmetric matrix $(a_{jk})_{j,k}$.  
\label {prop pos}
\end{prop}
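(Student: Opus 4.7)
The plan is to rewrite $T^m_a$ as pointwise multiplication by the low-frequency truncation $S_{m-1}a$ plus a remainder whose operator norm on $L^2$ is of order $\|a\|_{\mathrm{Lip}}\,2^{-m}$. The key algebraic manipulation is the telescoping identity $u=S_{m+1}u+\sum_{k\geq m+2}\Delta_k u$, which allows one to rewrite the definition of $T^m_a$ as
$$
T^m_a u = S_{m-1}a\cdot u + R_m u,\qquad R_m u:=\sum_{k\geq m+2}\bigl(S_{k-3}a-S_{m-1}a\bigr)\Delta_k u.
$$
(It is precisely to make this rewriting clean that the first summand in the definition of $T^m_a$ uses $S_{m-1}a$ rather than $S_{k-3}a$.) The first piece inherits positivity from $a$, while $R_m$ will be shown to be a small perturbation.

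For the main term, Proposition \ref{prop car lip} yields $\|\Delta_j a\|_{L^\infty}\leq C\|a\|_{\mathrm{Lip}}\,2^{-j}$, hence $\|a-S_{m-1}a\|_{L^\infty}\leq C'\|a\|_{\mathrm{Lip}}\,2^{-m}$. Thus $S_{m-1}a(x)\geq \lambda_0-C'\|a\|_{\mathrm{Lip}}\,2^{-m}$ pointwise, and since $S_{m-1}a$ is real-valued,
$$
\bigl(S_{m-1}a\cdot u,u\bigr)_{L^2}\geq \bigl(\lambda_0-C'\|a\|_{\mathrm{Lip}}\,2^{-m}\bigr)\|u\|^2_{L^2}.
$$

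For the remainder, I would exploit almost-orthogonality. Each factor $S_{k-3}a-S_{m-1}a=\sum_{j=m}^{k-3}\Delta_j a$ has Fourier support in $\{|\xi|\lesssim 2^{k-3}\}$ while $\Delta_k u$ is supported in an annulus $\{|\xi|\sim 2^k\}$; hence the product is Fourier supported in a union of a bounded number of dyadic annuli near $2^k$. Consequently
$$
(R_m u,u)_{L^2}=\sum_{k\geq m+2}\sum_{|\ell-k|\leq 2}\bigl((S_{k-3}a-S_{m-1}a)\Delta_k u,\Delta_\ell u\bigr)_{L^2}.
$$
Using the Lipschitz bound $\|S_{k-3}a-S_{m-1}a\|_{L^\infty}\leq C''\|a\|_{\mathrm{Lip}}\,2^{-m}$ (uniformly in $k$) together with Cauchy-Schwarz in $k$ and the Littlewood-Paley identity $\sum_k\|\Delta_k u\|^2_{L^2}\lesssim\|u\|^2_{L^2}$, I obtain $|(R_m u,u)_{L^2}|\leq C'''\|a\|_{\mathrm{Lip}}\,2^{-m}\,\|u\|^2_{L^2}$. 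Combining the two estimates,
$$
\mathrm{Re}\bigl(T^m_a u,u\bigr)_{L^2}\geq \bigl(\lambda_0-C\|a\|_{\mathrm{Lip}}\,2^{-m}\bigr)\|u\|^2_{L^2},
$$
and choosing $m$ large enough (depending only on $\lambda_0$ and $\|a\|_{\mathrm{Lip}}$) that $C\|a\|_{\mathrm{Lip}}\,2^{-m}\leq \lambda_0/2$ gives \eqref{pos}. The matrix-valued version is identical entry-by-entry: the rewriting yields a pointwise quadratic form with matrix $(S_{m-1}a_{jk})_{jk}$, which is still $\geq (\lambda_0/2)I$ for $m$ large since $\|a_{jk}-S_{m-1}a_{jk}\|_{L^\infty}$ is small. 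The main point requiring care is the frequency-support bookkeeping that underlies the almost-orthogonality argument for $R_m$; otherwise the proof is a straightforward combination of Propositions \ref{prop car sob}, \ref{prop car sob 2} and \ref{prop car lip}.
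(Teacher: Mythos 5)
Your argument is correct, but note that the paper itself does not prove this proposition: it imports it verbatim from Colombini--M\'etivier \cite[Cor.~3.12]{CMet}, so your proposal supplies a self-contained proof rather than reproducing one from the text. Your route --- using $u=S_{m+1}u+\sum_{k\geq m+2}\Delta_k u$ to write $T^m_a u=S_{m-1}a\cdot u+R_m u$ with $R_m u=\sum_{k\geq m+2}(S_{k-3}a-S_{m-1}a)\Delta_k u$, then getting positivity of the multiplication term from $\|a-S_{m-1}a\|_{L^\infty}\leq C\|a\|_{Lip}2^{-m}$ and smallness of $R_m$ on $L^2$ from the uniform bound $\|S_{k-3}a-S_{m-1}a\|_{L^\infty}\leq C\|a\|_{Lip}2^{-m}$ plus almost orthogonality of the blocks --- is essentially the mechanism behind the cited corollary, and it makes transparent why the first summand of $T^m_a$ was modified to $S_{m-1}aS_{m+1}u$: it is exactly what allows the clean identification of the multiplication operator $S_{m-1}a$ as the principal part. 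It also isolates precisely where Lipschitz regularity enters (only through the decay $\|\Delta_j a\|_{L^\infty}\lesssim\|a\|_{Lip}2^{-j}$, which yields the stated dependence of $m$ on $\lambda_0$ and $\|a\|_{Lip}$), and the matrix case does go through entrywise as you say, since $(S_{m-1}a_{jk})_{jk}$ is still real symmetric and its deviation from $(a_{jk})_{jk}$ is $O(2^{-m})$ in operator norm. Two cosmetic points: the interchanges of sums in $(R_mu,u)_{L^2}$ should be justified by the $L^2$-convergence of the series defining $T^m_au$ (almost orthogonality, i.e.\ estimate (\ref{estT_a}) with $s=0$, gives this), and your conclusion is the inequality with $\|u\|^2_{L^2}$ on the right-hand side, which is the intended (and later used, cf.\ (\ref{posest})) form of (\ref{pos}); the missing exponent there is a typo in the statement.
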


We state now a property of commutation which will be crucial 
in the proof of the Carleman estimate (see \cite[Prop.3.7]{CMet}). 

\begin{prop}
Let $m\in {\mathbb N}$, $a\in Lip$, $s\in (0,1)$ and $u\in H^{1-s}$. 

Then 
\begin{equation}
\big(\sum_{\nu=0}^{+\infty} 2^{-2\nu s}\|\partial_{x_j}([\Delta_\nu, T^m_a]\partial_{x_h} u)\|^2_{L^2}\big)^{1/2}\leq C_{m, s} \|a\|_{Lip}\, \|u\|_{H^{1-s}}
\label{estcomm}
\end{equation}
(where $[A, B]$ denotes the commutator between the operators $A$ and $B$, i.e. $[A, B]w= A(Bw)-B(Aw)$).
\label {prop comm}
\end{prop}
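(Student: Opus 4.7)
The plan is to exploit the two classical ingredients of Littlewood--Paley paraproduct analysis: the spectral localization of the summands of $T^m_a$, and the $2^{-\nu}$ gain produced by commuting a Lipschitz function through $\Delta_\nu$. Writing out the definition of $T^m_a$,
\[
[\Delta_\nu,T^m_a]\partial_{x_h}u\;=\;\sum_{k\geq m+2}[\Delta_\nu,S_{k-3}a]\,\Delta_k\partial_{x_h}u \;+\; R_\nu,
\]
where $R_\nu$ collects the commutator with the low-frequency block $S_{m-1}a\,S_{m+1}$. Because $R_\nu$ has Fourier support in $\{|\xi|\leq C\cdot 2^m\}$, it vanishes for $\nu$ large and contributes only finitely many terms bounded by $C_m\|a\|_{L^\infty}\|u\|_{L^2}$. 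In the main sum, since $S_{k-3}a\,\Delta_k\partial_{x_h}u$ is spectrally localized around $2^k$, the operator $\Delta_\nu$ annihilates every term with $|k-\nu|>C_0$, so only $O(1)$ indices $k\approx\nu$ survive.

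The heart of the argument is the standard Lipschitz commutator bound: realizing $\Delta_\nu$ as convolution with $\tilde\psi_\nu(z)=2^{\nu n}\tilde\psi(2^\nu z)$, combining $\|\nabla(S_{k-3}a)\|_{L^\infty}\leq C\|a\|_{Lip}$ from Proposition \ref{prop car lip} with $\int|z|\,|\tilde\psi_\nu(z)|\,dz\leq C\,2^{-\nu}$, and applying Schur's test to the kernel $\tilde\psi_\nu(x-y)\bigl(S_{k-3}a(y)-S_{k-3}a(x)\bigr)$ gives
\[
\|[\Delta_\nu,S_{k-3}a]w\|_{L^2}\leq C\|a\|_{Lip}\,2^{-\nu}\|w\|_{L^2}.
\]
Substituting $w=\Delta_k\partial_{x_h}u$ and applying Bernstein, $\|\Delta_k\partial_{x_h}u\|_{L^2}\leq C\,2^k\|\Delta_k u\|_{L^2}$, together with $|k-\nu|\leq C_0$, yields $\|[\Delta_\nu,S_{k-3}a]\Delta_k\partial_{x_h}u\|_{L^2}\leq C\|a\|_{Lip}\|\Delta_k u\|_{L^2}$. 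The full commutator is itself spectrally supported in $\{|\xi|\leq C\cdot 2^\nu\}$, so the outer $\partial_{x_j}$ costs only one factor $2^\nu$, and consequently
\[
2^{-\nu s}\|\partial_{x_j}[\Delta_\nu,T^m_a]\partial_{x_h}u\|_{L^2}\leq C_{m,s}\|a\|_{Lip}\,2^{\nu(1-s)}\sum_{|k-\nu|\leq C_0}\|\Delta_k u\|_{L^2}.
\]
Squaring, summing in $\nu$, and invoking the Littlewood--Paley characterization of $H^{1-s}$ from Proposition \ref{prop car sob} produces exactly (\ref{estcomm}).

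The main obstacle I anticipate is keeping the constant in the commutator bound truly uniform in $k$ and $\nu$: the $2^{-\nu}$ gain must not hide a $k$-dependent loss in $\|\nabla(S_{k-3}a)\|_{L^\infty}$. Proposition \ref{prop car lip} already guarantees this uniformity for Lipschitz $a$, and the Coifman--Meyer theorem cited in the introduction makes the argument cleaner by providing a uniform paradifferential-symbol estimate. Minor nuisances---the low-frequency remainder $R_\nu$, the separate treatment of $\nu=0$ (where the convolution kernel of $\Delta_0$ is $\chi_0$ rather than $\tilde\psi_\nu$), and boundary indices $k\leq 2$ in the Littlewood--Paley expansion---produce only finitely many low-frequency summands that can be absorbed into the constant $C_{m,s}$.
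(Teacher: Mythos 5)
Your proof is correct, and its skeleton coincides with the paper's: the same splitting of $[\Delta_\nu,T^m_a]$ into the low-frequency block $[\Delta_\nu,S_{m-1}a]S_{m+1}$ (which vanishes for $\nu$ large and is absorbed into $C_{m,s}$) and the main sum $\sum_{k\ge m+2}[\Delta_\nu,S_{k-3}a]\Delta_k$, the same observation that spectral localization kills all but the terms with $|k-\nu|$ bounded, the same use of Bernstein to pay one factor $2^\nu$ for the outer $\partial_{x_j}$, and the same final summation via the Littlewood--Paley characterization of $H^{1-s}$. The one genuine difference is the engine of the commutator gain: the paper invokes the Coifman--Meyer theorem \cite[Th.~35]{CMey} in the form $\|[\Delta_\nu,S_{k-3}a]\partial_{x_h}w\|_{L^2}\le C\|a\|_{Lip}\|w\|_{L^2}$, so the derivative $\partial_{x_h}$ is absorbed inside the cited estimate, whereas you prove the elementary Meyer-type bound $\|[\Delta_\nu,b]w\|_{L^2}\le C\,2^{-\nu}\|\nabla b\|_{L^\infty}\|w\|_{L^2}$ by Schur's test on the kernel $\tilde\psi_\nu(x-y)\bigl(b(y)-b(x)\bigr)$ and then recover the lost derivative by Bernstein applied to $\Delta_k\partial_{x_h}u$, using $2^{-\nu}2^k\simeq 1$ on the surviving diagonal terms. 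Your route is more self-contained and makes the mechanism of the $2^{-\nu}$ gain explicit, but it relies essentially on the spectral localization of the argument $\Delta_k u$; the Coifman--Meyer estimate used in the paper is stronger (it needs no localization of $w$), which is why the paper can apply it verbatim also to the low-frequency piece with $S_{m+1}u$, where you instead fall back on the cruder $\|a\|_{L^\infty}$ bound — legitimate here because all frequencies in that piece are $O(2^m)$, so both derivatives cost only $m$-dependent constants and $\|u\|_{L^2}\le\|u\|_{H^{1-s}}$.
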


\begin{proof}
We start remarking that 
$$
[\Delta_\nu, T^m_a]w=[\Delta_\nu, S_{m-1}a]S_{m+1}w+ \sum_{k=m+2}^{+\infty}[\Delta_\nu, S_{k-3}a]\Delta_k w,
$$
and consequently
$$
\begin{array}{ll}
\displaystyle{\partial_{x_j}([\Delta_\nu, T^m_a]\partial_{x_h} u) =
\partial_{x_j}([\Delta_\nu, S_{m-1}a]S_{m+1}(\partial_{x_h} u))}
\\ \qquad\qquad\qquad\qquad\qquad\qquad\displaystyle{+\partial_{x_j}(
 \sum_{k=m+2}^{+\infty}[\Delta_\nu, S_{k-3}a]\Delta_k (\partial_{x_h} u)).}
 \end{array}
 $$
In fact $\Delta_\nu$ and $\Delta_k$ commute so that
$$
\begin{array}{ll}
\displaystyle{\Delta_\nu(S_{m-1}aS_{m+1}w)-
S_{m-1}aS_{m+1}(\Delta_\nu w)}\\[0.3 cm]
\qquad\qquad\qquad\qquad\qquad\displaystyle{= \Delta_\nu(S_{m-1}aS_{m+1}w))-
S_{m-1}a\Delta_\nu (S_{m+1}w)}
\end{array}
$$
and similarly for the other term.

Let's consider 
$$
\partial_{x_j}([\Delta_\nu, S_{m-1}a]S_{m+1}(\partial_{x_h}u))=
\partial_{x_j}([\Delta_\nu, S_{m-1}a]\partial_{x_h}(S_{m+1}u).
$$
Looking at the support of the Fourier transform, it is easy to see that 
this term is identically equal to 0 if $\nu\geq m+4$. Moreover 
the support of the Fourier transform  is contained in $\{|\xi|\leq 2^{m+3}\}$.
From Bernstein's inequality we have
$$
\|\partial_{x_j}([\Delta_\nu, S_{m-1}a]\partial_{x_h}(S_{m+1}u))\|_{L^2}\leq 2^{m+3}
\|[\Delta_\nu, S_{m-1}a]\partial_{x_h}(S_{m+1}u)\|_{L^2}.
$$
On the other hand, using the result of \cite[Th. 35]{CMey} (see also \cite[Par. 3.6]{Tay}) we deduce that
$$
\|[\Delta_\nu, S_{m-1}a]\partial_{x_h}(S_{m+1}u)\|_{L^2}\leq  C
\|a\|_{Lip}\,\|S_{m+1}u\|_{L^2}\leq  C
\|a\|_{Lip}\,\|u\|_{L^2}.
$$
Consequently
$$
\|\partial_{x_j}([\Delta_\nu, S_{m-1}a]S_{m+1}(\partial_{x_h} u))\|_{L^2}\leq C 2^{m+3}
\|a\|_{Lip}\,\|u\|_{L^2},
$$
and, since $s\in (0,1)$, 
\begin{equation}
\begin{array}{ll}
\displaystyle{\sum_{\nu=0}^{+\infty}2^{-2\nu s}\|\partial_{x_j}([\Delta_\nu, 
S_{m-1}a]S_{m+1}(\partial_{x_h} u))\|_{L^2}^2}\\[0,3 cm]
\ \displaystyle{=
\sum_{\nu=0}^{m+3}2^{-2\nu s}\|\partial_{x_j}([\Delta_\nu, 
S_{ m-1}a]S_{m+1}(\partial_{x_h} u))\|_{L^2}^2
\leq C_{m, s} \|a\|_{Lip}^2\,\|u\|_{H^{1-s}}^2.}
\end{array}
\label{estcomm1}
\end{equation}

Let's consider 
$$
\partial_{x_j}\big (\sum_{k=m+2}^{+\infty}[\Delta_\nu, S_{k-3}a]\Delta_k (\partial_{x_h}u)\big)=\partial_{x_j}\big(\sum_{k=m+2}^{+\infty}[\Delta_\nu, S_{k-3}a]\partial_{x_h}(\Delta_k u)\big).
$$
Again looking at  the support of the Fourier transform, it is possible to see that
$ [\Delta_\nu, S_{k-3}a]\partial_{x_h}(\Delta_k u)$ is identically 0 if $|k-\nu|\geq 4$. 
Consequently the sum is on at most 7 terms: $\partial_{x_j}([\Delta_\nu, S_{\nu-6}a]\partial_{x_h}(\Delta_{\nu-3} u))+\dots +\partial_{x_j}([\Delta_\nu, S_{\nu}a]\\ \partial_{x_h}(\Delta_{\nu+3} u))$, each of them having the support of the Fourier transform contained in $\{|\xi|\leq C2^\nu\}$. 
Let's consider one of these terms, e.g. $\partial_{x_j}([\Delta_\nu, S_{\nu-3}a] \\ \partial_{x_h}(\Delta_{\nu} u))$, the computation for the others being similar. We have, from Bernstein's inequality
$$
\|\partial_{x_j}([\Delta_\nu, S_{\nu-3}a]\partial_{x_h}(\Delta_{\nu} u))\|_{L^2}
\leq C2^\nu \|[\Delta_\nu, S_{\nu-3}a]\partial_{x_h}(\Delta_{\nu} u)\|_{L^2}.
$$
and consequently, using again \cite[Th. 35]{CMey},
$$
\|\partial_{x_j}([\Delta_\nu, S_{\nu-3}a]\partial_{x_h}(\Delta_{\nu} u))\|_{L^2}
\leq C 2^\nu \|a\|_{Lip}\, \|\Delta_{\nu} u\|_{L^2}.
$$
Since $u\in H^{1-s}$ and consequently the sequence $\big(2^{\nu(1-s)}\|\Delta_\nu u\|_{L^2}\big)_\nu$ is in $l^2$ then the same is valid for
$\big(2^{-\nu s}\|\partial_{x_j}([\Delta_\nu, S_{\nu-3}a]\partial_{x_h}(\Delta_{\nu} u))\|_{L^2}\big)_\nu$ and
$$
\sum_{\nu=0}^{+\infty}2^{-2\nu s}\|\partial_{x_j}([\Delta_\nu, S_{\nu-3}a]\partial_{x_h}(\Delta_{\nu} u))\|_{L^2}^2\leq C_{m, s} \|a\|_{Lip}^2\, \|u\|_{H^{1-s}}^2.
$$
The computation of the other terms being similar we obtain
\begin{equation}
\sum_{\nu=0}^{+\infty}2^{-2\nu s}\|\partial_{x_j}(\sum_{k=m+2}^{+\infty}[\Delta_\nu, S_{k-3}a]\partial_{x_h}(\Delta_{k} u))\|_{L^2}^2\leq C_{m, s} \|a\|_{Lip}^2\, \|u\|_{H^{1-s}}^2.
\label{estcomm2}
\end{equation}
The estimate (\ref{estcomm}) follows from (\ref{estcomm1}) and (\ref{estcomm2}), concluding the proof.

\end{proof}

We end this subsection with a result on the adjoint of $T^m_a$ (see \cite[Prop. 3.8 and Prop. 3.11]{CMet}.
\begin{prop} \label{prop adj}
Let $m\in {\mathbb N}$, $a\in Lip$ and $u\in H^s$.
Then 
\begin{equation}
\label{adj}
\|(T^m_a-(T^m_a)^*)\partial_{x_j}u \|_{L^2}\leq C_m\|a\|_{Lip}\|u\|_{L^2}.
\end{equation}
\end{prop}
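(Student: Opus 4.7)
My plan is to compute $T^m_a-(T^m_a)^*$ in terms of dyadic commutators and then estimate the two resulting pieces separately, following the same pattern as the proof of Proposition \ref{prop comm}. Since $\chi$ is real and even, each $\tilde\chi_k$ is real and even, so $S_k$ and $\Delta_k$ are self-adjoint on $L^2$. Taking $L^2$-adjoints block by block in the definition of $T^m_a$ (I treat $a$ as real-valued, as in the intended application to the coefficients $a_{jk}$) and applying to $\partial_{x_j}u$, with the derivative pulled inside each commutator, I obtain
\begin{equation*}
(T^m_a-(T^m_a)^*)\partial_{x_j}u \;=\; -[S_{m+1},\,S_{m-1}a]\partial_{x_j}u \;-\; \sum_{k=m+2}^{+\infty}[\Delta_k,\,S_{k-3}a]\partial_{x_j}u.
\end{equation*}

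The low-frequency piece is handled by elementary dyadic analysis: writing
$$
S_{m+1}(S_{m-1}a\,\partial_{x_j}u)=\partial_{x_j}S_{m+1}(S_{m-1}a\cdot u)-S_{m+1}(\partial_{x_j}(S_{m-1}a)\cdot u)
$$
and invoking Bernstein's inequality together with Proposition \ref{prop car lip}, one obtains $\|[S_{m+1},S_{m-1}a]\partial_{x_j}u\|_{L^2}\le C_m\|a\|_{Lip}\|u\|_{L^2}$. For the high-frequency sum, I reuse the Fourier-support bookkeeping from the proof of Proposition \ref{prop comm}: $[\Delta_k,S_{k-3}a]\partial_{x_j}\Delta_\ell u$ vanishes unless $|k-\ell|\le 2$, so
\begin{equation*}
[\Delta_k,S_{k-3}a]\partial_{x_j}u \;=\; [\Delta_k,S_{k-3}a]\partial_{x_j}\tilde\Delta_k u,\qquad \tilde\Delta_k u:=\sum_{|\ell-k|\le 2}\Delta_\ell u.
\end{equation*}
A direct application of the Coifman--Meyer commutator theorem \cite[Th.\,35]{CMey}, uniformly in $k$ because $\chi_k$ is a dyadic dilation of a fixed Schwartz function, then yields $\|[\Delta_k,S_{k-3}a]\partial_{x_j}u\|_{L^2}\le C\|a\|_{Lip}\|\tilde\Delta_k u\|_{L^2}$.

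To close the argument I observe that the Fourier support of $[\Delta_k,S_{k-3}a]\partial_{x_j}u$ lies in a dyadic annulus $\{c\,2^k\le|\xi|\le C\,2^k\}$, so Proposition \ref{prop car sob 2} with $s=0$ bounds the $L^2$ norm of the sum by the $\ell^2$ norm of the individual $L^2$ norms. Combining this with the localized estimate above and invoking Proposition \ref{prop car sob} for $\tilde\Delta_k u$ gives
\begin{equation*}
\Bigl\|\sum_{k=m+2}^{+\infty}[\Delta_k,S_{k-3}a]\partial_{x_j}u\Bigr\|_{L^2}^2 \;\le\; C\|a\|_{Lip}^2\,\|u\|_{L^2}^2,
\end{equation*}
which together with the low-frequency bound yields (\ref{adj}). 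The main subtle point is the uniform-in-$k$ version of the Coifman--Meyer commutator bound exploited above; since $\chi_k(\xi)=\chi(2^{-k}|\xi|)$ comes from a fixed Schwartz function by dyadic dilation, this uniformity is standard and is exactly the ingredient already used in Proposition \ref{prop comm}, so no new tool is required.
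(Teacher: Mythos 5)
Your proposal is correct and follows essentially the same route as the paper: the same decomposition of $(T^m_a-(T^m_a)^*)\partial_{x_j}u$ into the fixed low-frequency commutator plus the dyadic sum $\sum_{k\ge m+2}[\Delta_k,S_{k-3}a]\partial_{x_j}u$, the same frequency localization onto finitely many neighbouring blocks, the same uniform-in-$k$ application of \cite[Th.~35]{CMey} with Proposition \ref{prop car lip}, and the same almost-orthogonality summation via Proposition \ref{prop car sob 2}. The only (inessential) deviation is that for the low-frequency block you use an elementary Bernstein argument, where the paper simply applies the Coifman--Meyer bound once more; since the constant is allowed to depend on $m$, both work equally well.
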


\begin{proof}
We  remark that
$$
(T^m_a-(T^m_a)^*)\partial_{x_j}u
=[S_{m-1}a, S_{m+1}] \partial_{x_j}u +\sum_{k=m+2}^{+\infty}[S_{k-3}a, \Delta_k] \partial_{x_j}u.
$$
From \cite[Th. 35]{CMey} we deduce that
$$
\|[S_{m-1}a, S_{m+1}] \partial_{x_j}u\| \leq C\|\nabla_x(S_{m-1} a)\|_{L^\infty}\|u\|_{L^2},
$$
and hence, from Prop \ref{prop car lip}, we obtain
\begin{equation}
\label{est adj 1}
\|[S_{m-1}a, S_{m+1}] \partial_{x_j}u\| \leq C\| a\|_{Lip}\|u\|_{L^2}.
\end{equation}
On the other hand we have that the support of Fourier transform of $[S_{k-3}a, \\\Delta_k] \partial_{x_j}u$ is contained in $\{2^{k-2}\leq|\xi|\leq 2^{k+2}\}$. Moreover it is easy to see that
$$
[S_{k-3}a, \Delta_k] \partial_{x_j}u=[S_{k-3}a, \Delta_k] \partial_{x_j}((\Delta_{k-1}+\Delta_{k}+\Delta_{k+1})u).
$$
Again from \cite[Th. 35]{CMey} and Proposition \ref{prop car lip} we have
$$
\begin{array}{ll}
\displaystyle{\|[S_{k-3}a, \Delta_k] \partial_{x_j}u\|_{L^2}}&=\displaystyle{
{\|[S_{k-3}a, \Delta_k] \partial_{x_j}((\Delta_{k-1}+\Delta_{k}+\Delta_{k+1})u)\|_{L^2}}}\\[0.3 cm]
&\leq \displaystyle{C \|a\|_{Lip}(\|\Delta_{k-1} u\|_{L^2}+\|\Delta_{k} u\|_{L^2}+\|\Delta_{k+1} u\|_{L^2})}.\\
\end{array}
$$
From Proposition \ref{prop car sob 2} we finally obtain that $\sum _{k=m+2}^{+\infty} 
[S_{k-3}a, \Delta_k] \partial_{x_j}u \in L^2$ and 
\begin{equation}\label{est adj 2}
\|\sum _{k=m+2}^{+\infty}  [S_{k-3}a, \Delta_k] \partial_{x_j}u\|\leq C_m \|a\|_{Lip}
\|u\|_{L^2}.
\end{equation}
The estimate (\ref{adj}) follows from (\ref{est adj 1}) and (\ref{est adj 2}).
\end{proof}


\subsection{Approximation and Carleman estimate}
We set $v(t,x)=e^{{1\over \gamma}\Phi(\gamma(T-t))}
u(t,x)$. The inequality (\ref{Carlest}) becomes:
for all $s\in (0,1)$, there  exist $\gamma_0$, $C>0$  such that
\begin{equation}
\begin{array}{ll}
\displaystyle{
\int_0^{T\over 2} \|\partial_t v+ \sum_{j,k = 1}^n \partial_{x_j} (a_{jk} (t,x)
\partial_{x_k} v)+ \Phi'(\gamma(T-t))v \|^2_{H^{-s}}\;
dt}\\[0.5 cm]
\qquad\qquad\qquad\displaystyle{\geq C\gamma^{1\over 2}\int_0^{T\over 2} 
(\|\nabla_{x} v\|^2_{H^{-s}}+\gamma^{1\over
2}\|v\|^2_{H^{-s}})\; dt,}
\end{array}
\label{Carlest1}
\end{equation}
for all $\gamma>\gamma_0$ and for all $v\in
C^\infty_0({\mathbb R}^{n+1})$
such that $\,{\rm supp}\, v\subseteq [0, T/2]\times {\mathbb R}^n$.
Using the Proposition \ref{prop pos} we fix the parameter $m$ in such a way that the
modified paraproduct associated to $(a_{jk})_{j,k}$ is a positive matrix operator.
From the second part of Proposition \ref{prop paraprod} (estimate (\ref{a-Ta})), the inequality (\ref{Carlest1}) will be deduced from the following
\begin{equation}
\begin{array}{ll}
\displaystyle{
\int_0^{T\over 2} \|\partial_t v+ \sum_{j,k = 1}^n \partial_{x_j} (T^m_{a_{jk}} 
\partial_{x_k} v)+ \Phi'(\gamma(T-t))v \|^2_{H^{-s}}\;
dt}\\[0.5 cm]
\qquad\qquad\qquad\displaystyle{\geq C\gamma^{1\over 2}\int_0^{T\over 2} 
(\|\nabla_{x} v\|^2_{H^{-s}}+\gamma^{1\over
2}\|v\|^2_{H^{-s}})\; dt,}
\end{array}
\label{Carlest2}
\end{equation}
as the quantity $2\|\sum_{j,k=1}^n\partial_{x_j} ((a_{jk}-T^m_{a_{jk}})\partial_{x_k}v)\|^2_{H^{-s}}$ can be absorbed by the right hand side part of (\ref{Carlest1}), possibly taking  different $C$ and $\gamma_0$.

Let's go back to the Littlewood-Paley decomposition; a consequence of Proposition
 \ref{prop car sob} is that, denoting from now on $\Delta_k u$ by $u_k$,  there exists $K_s>0$ such that
$$
 {1\over K_s}\sum_\nu2^{-2\nu s}\|u_\nu\|_{L^2}^2\leq \|u\|^2_{H^{-s}}\leq
K_s\sum_\nu 2^{-2\nu s}\|u_\nu\|^2_{L^2}
$$
 for all $u\in H^{-s}$. We have
$$
\begin{array}{l}
\displaystyle{\int_0^{T\over 2} \|\partial_t v+ \sum_{j,k = 1}^n \partial_{x_j} (T^m_{a_{jk}} 
\partial_{x_k} v)+ \Phi'(\gamma(T-t))v \|^2_{H^{-s}}
dt}\\[0.3 cm]
\displaystyle{\geq {1\over K_s}\int_0^{T\over 2}
\sum_\nu2^{-2\nu s}\|\Delta_\nu(\partial_t v+ \sum_{j,k = 1}^n \partial_{x_j} (T^m_{a_{jk}} 
\partial_{x_k} v)+ \Phi'(\gamma(T-t))v)\|_{L^2}^2\;
dt}\\[0.3 cm]
\displaystyle{\geq {1\over K_s}\int_0^{T\over 2}
\sum_\nu2^{-2\nu s}\|\partial_t v_\nu+ \sum_{j,k = 1}^n \partial_{x_j} (T^m_{a_{jk}} 
\partial_{x_k} v_\nu)+ \Phi'(\gamma(T-t))v_\nu}\\[0.3 cm]
\qquad\qquad\qquad\qquad\qquad\qquad\qquad\qquad\qquad\displaystyle{+\sum_{j,k = 1}^n
\partial_{x_j}([\Delta_\nu, T^m_{a_{jk}}]\partial_{x_k} v) \|_{L^2}^2\;
dt}\\[0.3 cm]
\displaystyle{\geq {1\over 2K_s}\int_0^{T\over 2}
\sum_\nu2^{-2\nu s}\|\partial_t v_\nu+ \sum_{j,k = 1}^n \partial_{x_j} (T^m_{a_{jk}} 
\partial_{x_k} v_\nu)+ \Phi'(\gamma(T-t))v_\nu\|_{L^2}^2\;dt}\\[0.5 cm]
\displaystyle{\quad\qquad\quad\qquad\quad\qquad\quad-{1\over K_s}\int_0^{T\over
2}\sum_\nu 2^{-2\nu s}\|
\sum_{j,k = 1}^n
\partial_{x_j}([\Delta_\nu, T^m_{a_{jk}}]\partial_{x_k} v) \|_{L^2}^2\; dt.}\\
\end{array}
$$
From the result of Proposition \ref{prop comm} is then immediate that  (\ref{Carlest2})
will be deduced from the same estimate from below for 
$$
\int_0^{T\over 2}
\sum_\nu2^{-2\nu s}\|\partial_t v_\nu+ \sum_{j,k = 1}^n \partial_{x_j} (T^m_{a_{jk}} 
\partial_{x_k} v_\nu)+ \Phi'(\gamma(T-t))v_\nu\|_{L^2}^2\;dt,
$$
again with  possibly different $C$ and $\gamma_0$.
We have
$$
\begin{array}{l} 
\displaystyle{\int_0^{T\over 2}\sum_\nu 2^{-2\nu s}\|\partial_tv_\nu+\sum_{jk} 
\partial_{x_j}(T^m_{a_{jk}}\partial_{x_k} v_\nu) + \Phi'(\gamma(T-t)) 
v_\nu\|_{L^2}^2\;dt}\\[0.3 cm] 
\displaystyle{\quad=\int_0^{T\over 
2}\sum_\nu 2^{-2\nu s}(\|\partial_tv_\nu\|_{L^2}^2+\|\sum_{jk} 
\partial_{x_j}(T^m_{a_{jk}}\partial_{x_k} v_\nu) + \Phi'(\gamma(T-t)) 
v_\nu\|_{L^2}^2}\\[0.5 cm] 
\displaystyle{\qquad+\gamma\Phi''(\gamma(T-t))\|v_\nu\|_{L^2}^2+2\,{\rm 
Re}\,\langle 
\partial_tv_\nu,\; \sum_{jk} \partial_{x_j}(T^m_{a_{jk}}\partial_{x_k} 
v_\nu)\rangle_{L^2})\;dt.}\\ 
\end{array} 
$$
We approximate the last term in the above equality using a well known technique 
which goes back to \cite {CDGS}. Let $\rho\in 
C^\infty_0({\mathbb R})$ with $\,{\rm supp}\, \rho\subseteq[-1/2,\; 1/2]$, 
$\int_{\mathbb R} \rho(s)\;ds=1$ and 
$\rho(s)\geq 0$ for all $s\in {\mathbb R}$; we set 
$$ 
a_{jk,\,\varepsilon}(t,x)=\int_{\mathbb R} a_{jk}(s,x){1\over 
\varepsilon}\rho({t-s\over \varepsilon})\; ds 
$$ 
for $\varepsilon\in\;]0, 1/2]$. We obtain from (\ref{reg}) that there exist $C$ such that 
\begin{equation}
|a_{jk,\,\varepsilon}(t,x)-a_{jk}(t,x)|\leq C\mu(\varepsilon)
\label{a-a_eps} 
\end{equation} and 
\begin{equation} 
|\partial_t a_{jk,\,\varepsilon}(t,x)|\leq C\,{\mu(\varepsilon)\over 
\varepsilon} ,
\label{a'_eps} 
\end{equation} 
\vskip0.2 cm 
\noindent for all $j,k=1\dots,n$ and for all $(t,x)\in [0,T]\times{\mathbb 
R}^n_x$ . We have 
$$ 
\begin{array}{l} 
\displaystyle{\int_0^{T\over 2}2\,{\rm Re}\,\langle 
\partial_tv_\nu,\; \sum_{jk} \partial_{x_j}(T^m_{a_{jk}}\partial_{x_k} 
v_\nu)\rangle_{L^2}\;dt}\\[0.3 cm] 
\displaystyle{=-2\,{\rm Re}\,\int_0^{T\over 2}\sum_{jk} 
\langle\partial_{x_j}\partial_t v_\nu, 
\; T^m_{a_{jk}}\partial_{x_k} v_\nu\rangle_{L^2}\;dt}\\[0.3 cm] 
\displaystyle{=-2\,{\rm Re}\,\int_0^{T\over 2}\sum_{jk} 
\langle\partial_{x_j}\partial_t v_\nu, 
\; (T^m_{a_{jk}}-T^m_{a_{jk,\,\varepsilon}})\partial_{x_k} 
v_\nu\rangle_{L^2}\;dt}\\[0.3 cm] 
\displaystyle{\qquad\qquad\qquad -2\,{\rm Re}\,\int_0^{T\over 2}\sum_{jk} 
\langle\partial_{x_j}\partial_t v_\nu, \; T^m_{a_{jk,\,\varepsilon}}\partial_{x_k} 
v_\nu\rangle_{L^2}\;dt.}\\ 
\end{array} 
$$ 
We remark that $T^m_{a_{jk}}-T^m_{a_{jk,\,\varepsilon}}=T^m_{a_{jk}-a_{jk,\,\varepsilon}} $ and consequently, from (\ref{estT_a}) and (\ref{a-a_eps}),  we have that
$$
\|(T^m_{a_{jk}}-T^m_{a_{jk,\,\varepsilon}})\partial_{x_k} v_\nu\|_{L^2}= 
\|T^m_{a_{jk}-a_{jk,\,\varepsilon}}\partial_{x_k} v_\nu\|_{L^2}
\leq C \mu(\varepsilon) \|\partial_{x_k} v_\nu\|_{L^2}.
$$
Moreover 
 $\|\partial_{x_j}v_\nu\|_{L^2}\leq 
2^{\nu+1}\|v_\nu\|_{L^2}$ and 
$\|\partial_{x_j}\partial_tv_\nu\|_{L^2}\leq 
2^{\nu+1}\|\partial_tv_\nu\|_{L^2}$ for all $\nu\in 
{\mathbb N}$. Hence
$$
\begin{array}{l} 
\displaystyle{|2\,{\rm Re}\,\int_0^{T\over 2}\sum_{jk} 
\langle\partial_{x_j}\partial_t v_\nu, \; 
(T^m_{a_{jk}}-T^m_{a_{jk,\,\varepsilon}})\partial_{x_k} v_\nu\rangle_{L^2}\;dt|}\\[0.3 cm] 
\displaystyle{\qquad\leq 2 C \mu(\varepsilon)\int_0^{T\over 2}\sum_{jk} 
\|\partial_{x_j}\partial_t v_\nu\|_{L^2}\;\|\partial_{x_k}v_\nu\|_{L^2}\; 
dt}\\[0.3 cm] 
\displaystyle{\qquad\qquad\leq {C\over N}\int_0^{T\over 
2}\|\partial_tv_\nu\|_{L^2}^2\;dt Ê+CN\, 
2^{4(\nu+1)}\mu(\varepsilon)\int_0^{T\over 2}\|v_\nu\|_{L^2}^2\;dt} \\ 
\end{array} 
$$ for all $N>0$ (note that $\mu(\varepsilon)^2\leq \mu(\varepsilon)$). 
On the other hand
$\partial _t(T^m_{a_{jk,\,\varepsilon}}w)=T_{\partial_ta_{jk,\,\varepsilon}}w +T_{a_{jk,\,\varepsilon}}\partial_t w$,  then, using also the fact that the matrix $(a_{jk})_{j,k}$ is real and symmetric, 
$$
\begin{array}{ll}
\displaystyle{-2\,{\rm Re}\,\int_0^{T\over 2}\sum_{jk} 
\langle\partial_{x_j}\partial_t v_\nu, \; T^m_{a_{jk,\,\varepsilon}}\partial_{x_k} 
v_\nu\rangle_{L^2}\;dt}\\[0.5 cm]
=\displaystyle{\int_0^{T\over 2}\sum_{jk}(\langle\partial_{x_j} v_\nu,T^m_{\partial_t a_{jk,\,\varepsilon}}\partial_{x_k}v_\nu\rangle_{L^2} +\langle (T^m_{a_{jk,\,\varepsilon}}-(T^m_{a_{jk,\,\varepsilon}})^*) \partial_{x_j}v_\nu, \partial_{x_k}\partial_t v_\nu\rangle_{L^2})dt.}
\end{array}
$$
From (\ref{estT_a}) and (\ref{a'_eps}) we deduce
$$
|\int_0^{T\over 2}\sum_{jk} \langle\partial_{x_j} v_\nu,T^m_{\partial_t a_{jk,\,\varepsilon}}\partial_{x_k}v_\nu\rangle_{L^2}\,dt|
\leq  C\, 2^{2(\nu+1)}\, 
{\mu(\varepsilon)\over 
\varepsilon}\int_0^{T\over 2} 
\|v_\nu\|_{L^2}^2\; dt,
$$
and, from (\ref{adj}) and (\ref{a-a_eps}),
$$
\begin{array}{ll}
\displaystyle{|\int_0^{T\over 2}\sum_{jk}\langle (T^m_{a_{jk,\,\varepsilon}}-(T^m_{a_{jk,\,\varepsilon}})^*) \partial_{x_j}v_\nu, \partial_{x_k}\partial_t v_\nu\rangle_{L^2})dt|}\\[0.3 cm] 
\displaystyle{\qquad\leq 2 C \mu(\varepsilon)\int_0^{T\over 2}\sum_{jk} 
\|\partial_{x_j} v_\nu\|_{L^2}\;\|\partial_{x_k}\partial_tv_\nu\|_{L^2}\; 
dt}\\[0.5 cm]
\displaystyle{\qquad\qquad\leq {C\over N}\int_0^{T\over 
2}\|\partial_tv_\nu\|_{L^2}^2\;dt Ê+CN\, 
2^{4(\nu+1)}\mu(\varepsilon)\int_0^{T\over 2}\|v_\nu\|_{L^2}^2\;dt} 
\end{array} 
$$ for all $N>0$. Choosing suitably N, we finally obtain
\begin{equation} 
\begin{array}{l} 
\displaystyle{\int_0^{T\over 2}\|\partial_tv_\nu+\sum_{jk} 
\partial_{x_j}(T^m_{a_{jk}}\partial_{x_k} v_\nu) + \Phi'(\gamma(T-t)) 
v_\nu\|^2_{L^2}\;dt}\\[0.3 cm] 
\displaystyle{\geq\int_0^{T\over 2}(\|\sum_{jk} 
\partial_{x_j}(T^m_{a_{jk}}\partial_{x_k} v_\nu) + \Phi'(\gamma(T-t)) v_\nu\|^2_{L^2} 
+\gamma\Phi''(\gamma(T-t))\|v_\nu\|^2_{L^2}}\\[0.3 cm] 
\displaystyle{\qquad\qquad-C( 
2^{4(\nu+1)}\,\mu(\varepsilon)+ 
2^{2(\nu+1)}\, {\mu(\varepsilon)\over \varepsilon}) 
\|v_\nu\|^2_{L^2})\;dt.}\\ 
\end{array} 
\label{finalest} 
\end{equation}

\subsection{End of the proof of the Carleman estimate}

From now on the proof is exactly the same as in \cite[Par. 3.2]{DSP}. We detail it for the reader's convenience.
Let $\nu=0$. From (\ref{weightatinfinity}) we can choose $\gamma_0>0$ 
such that 
$\Phi''(\gamma(T-t))\geq 1$ for all $\gamma>\gamma_0$ and for all 
$t\in [0,\, T/2]$. Taking now $\varepsilon=1/2$ we obtain from (\ref{finalest}) that 
$$ 
\begin{array}{l} 
\displaystyle{\int_0^{T\over 2}\|\partial_tv_0+\sum_{jk} 
\partial_{x_j}(T^m_{a_{jk}}\partial_{x_k} v_0) + \Phi'(\gamma(T-t)) 
v_0\|^2_{L^2}\;dt}\\[0.3 cm] 
\displaystyle{\qquad\qquad\qquad\qquad\geq\int_0^{T\over 
2}(\gamma-16C\mu({1\over 2}))\|v_0\|^2_{L^2}\;dt}\\ 
\end{array} 
$$ for all $\gamma>\gamma_0$. Possibly choosing a larger $\gamma_0$ we 
have, again for all 
$\gamma>\gamma_0$, 
\begin{equation}
\int_0^{T\over 2}\|\partial_tv_0+\sum_{jk} 
\partial_{x_j}(T^m_{a_{jk}}\partial_{x_k} v_0) + \Phi'(\gamma(T-t)) 
v_0\|^2_{L^2}
\geq {\gamma\over 2} \int_0^{T\over 2}\|v_0\|^2_{L^2}\; dt.
\label {estnu=0} 
\end{equation}

Let now $\nu\geq 1$.  We take $\varepsilon=2^{-2\nu}$. ÊWe obtain from 
(\ref{finalest}) that 
$$
\begin{array}{l} 
\displaystyle{\int_0^{T\over 2}\|\partial_tv_\nu+\sum_{jk} 
\partial_{x_j}(T^m_{a_{jk}}\partial_{x_k} v_\nu) + \Phi'(\gamma(T-t)) 
v_\nu\|^2_{L^2}\;dt}\\[0.3 cm] 
\displaystyle{\qquad\geq\int_0^{T\over 2}(\|\sum_{jk} 
\partial_{x_j}(T^m_{a_{jk}}\partial_{x_k} v_\nu) + \Phi'(\gamma(T-t)) v_\nu\|^2_{L^2} 
}\\[0.3 cm] 
\displaystyle{\qquad\qquad\qquad+\gamma\Phi''(\gamma(T-t))\|v_\nu\|^2_{L^2}-K\, 
2^{4\nu}\,\mu(2^{-2\nu})\|v_\nu\|^2_{L^2})\; dt}\\[0.3 cm] 
\displaystyle{\qquad\geq\int_0^{T\over 2}((\|\sum_{jk} 
\partial_{x_j}(T^m_{a_{jk}}\partial_{x_k} v_\nu)\|_{L^2} - \Phi'(\gamma(T-t))\| 
v_\nu\|_{L^2})^2 }\\[0.3 cm] 
\displaystyle{\qquad\qquad\qquad+\gamma\Phi''(\gamma(T-t))\|v_\nu\|^2_{L^2}-K\, 
2^{4\nu}\,\mu(2^{-2\nu})\|v_\nu\|^2_{L^2})\; dt} 
\end{array} 
$$ where $K=16C$. ÊOn the other hand, from (\ref{pos}), recalling that in this case 
$\|\nabla v_\nu\|\geq  2^{\nu-1}\|v_\nu\|$,
 we have 
\begin{equation}
\label{posest} 
\begin{array}{l} 
\displaystyle{ \|\sum_{jk} \partial_{x_j}(T^m_{a_{jk}}\partial_{x_k} v_\nu)\|_{L^2} 
\;\|v_\nu\|_{L^2}\geq 
|\langle\sum_{jk} \partial_{x_j}(T^m_{a_{jk}}\partial_{x_k} v_\nu),\; 
v_\nu\rangle_{L^2}|}\\[0.3 cm] 
\displaystyle{\qquad\qquad\geq|\sum_{jk}\langle T^m_{a_{jk}}\partial_{x_k} v_\nu,\; 
\partial_{x_j}v_\nu\rangle_{L^2}| 
\geq {\lambda_0\over 2}\|\nabla v_\nu\|^2_{L^2}\geq {\lambda_0\over 8}\, 
2^{2\nu}\,\| v_\nu\|^2_{L^2}.}\\ 
\end{array} 
\end{equation}

Suppose first that $\Phi'(\gamma(T-t))\leq{\lambda_0\over 8}\, 
2^{2\nu}$. Then from 
(\ref{posest}) we deduce that 
$$ 
Ê\|\sum_{jk} \partial_{x_j}(T^m_{a_{jk}}\partial_{x_k} v_\nu)\|_{L^2} - 
\Phi'(\gamma(T-t))\| v_\nu\|_{L^2}\geq 
{\lambda_0\over 16}\, 2^{2\nu}\|v_\nu\|_{L^2} 
$$ and then, using also the fact that $\Phi''(\gamma(T-t))\geq 1$, we obtain 
that 
$$
\begin{array}{l} 
\displaystyle{\int_0^{T\over 2}((\|\sum_{jk} 
\partial_{x_j}(T^m_{a_{jk}}\partial_{x_k} v_\nu)\|_{L^2} - \Phi'(\gamma(T-t))\| 
v_\nu\|_{L^2})^2 }\\[0.3 cm] 
\displaystyle{\qquad\qquad\qquad+\gamma\Phi''(\gamma(T-t))\|v_\nu\|_{L^2}^2-K\, 
2^{4\nu}\,\mu(2^{-2\nu})\|v_\nu\|_{L^2}^2\; )dt}\\[0.3 cm] 
\displaystyle{\quad \geq \int_0^{T\over 2}(({\lambda_0\over16}\, 
2^{2\nu})^2+\gamma -K\, 
2^{4\nu}\,\mu(2^{-2\nu})\|v_\nu\|_{L^2}^2)\; dt}\\[0.5 cm] 
\displaystyle{\qquad\geq\int_0^{T\over 2}(({1\over 2}({\lambda_0\over 16})^2-K\mu(2^{-2\nu}))\, 
2^{4\nu}+{\gamma\over 3})\|v_\nu\|_{L^2}^2\; dt }\\[0.5 cm]
\displaystyle{\qquad\qquad\qquad\qquad+\int_0^{T\over 2}({1\over 2}({\lambda_0\over 16})^2\,2^{4\nu}
+{2\over 3}\gamma)\|v_\nu\|_{L^2}^2\; dt.}\\ 
\end{array} 
$$
Since $\lim_{\nu\to+\infty} \mu(2^{-2\nu})=0$, there exists $\gamma_0>0$ such that 
$$({1\over 2}({\lambda_0\over 16})^2-K\mu(2^{-2\nu}))\,2^{4\nu} + 
{\gamma\over 3}\geq 0$$ for all $\gamma\geq \gamma_0$ and for all $\nu\geq 1$.
Consequently there exist 
$\gamma_0$ and Ê$c>0$ not depending on $\nu$ such that
\begin{equation} 
\begin{array}{l} 
\displaystyle{\int_0^{T\over 2}((\|\sum_{jk} 
\partial_{x_j}(T^m_{a_{jk}}\partial_{x_k} v_\nu)\|_{L^2} - \Phi'(\gamma(T-t))\| 
v_\nu\|_{L^2})^2 }\\[0.5 cm] 
\displaystyle{\qquad\qquad\qquad+\gamma\Phi''(\gamma(T-t))\|v_\nu\|_{L^2}^2-K\, 
2^{4\nu}\,\mu(2^{-2\nu})\|v_\nu\|_{L^2}^2\; )dt}\\[0.5 cm] 
\displaystyle{\geq\int_0^{T\over 2}({1\over 2}({\lambda_0\over 16})^2\, 
2^{4\nu}+{2\over 3}\gamma)\|v_\nu\|_{L^2}^2\; dt \geq 
\int_0^{T\over 2}({\gamma\over 2}+c\gamma^{1\over 2}\, 
2^{2\nu})\|v_\nu\|_{L^2}^2\; dt}\\ 
\end{array} 
\label{estphismall} 
\end{equation} for all $\gamma\geq \gamma_0$.

If on the contrary 
$\Phi'(\gamma(T-t))\geq{\lambda_0\over 16}\, 2^{2\nu}$ then, using (\ref{diffeq}), 
the fact that 
$\lambda_0\leq 1$ and the properties of $\mu$, 
$$ 
\begin{array}{l} 
\displaystyle{\Phi''(\gamma(T-t))= (\Phi'(\gamma(T-t))^2\mu({1\over 
\Phi'(\gamma(T-t))})}\\[0.3 cm] 
\displaystyle{\qquad\qquad\qquad\geq ({\lambda_0\over 16})^2\, 2^{4\nu}\, 
\mu({16\over 
\lambda_0}\,2^{-2\nu}) 
\geq ({\lambda_0\over 16})^2\, 2^{4\nu}\, \mu(2^{-2\nu}).}\\ 
\end{array} 
$$ 
Hence also in this case there exist $\gamma_0$ and Ê$c>0$ such that 
\begin{equation} 
\begin{array}{l} 
\displaystyle{\int_0^{T\over 2}((\|\sum_{jk} 
\partial_{x_j}(T^m_{a_{jk}}\partial_{x_k} v_\nu)\|_{L^2} - \Phi'(\gamma(T-t))\| 
v_\nu\|_{L^2})^2 }\\[0.3 cm] 
\displaystyle{\qquad\qquad\qquad+\gamma\Phi''(\gamma(T-t))\|v_\nu\|_{L^2}^2-K\, 
2^{4\nu}\,\mu(2^{-2\nu})\|v_\nu\|_{L^2}^2)\; dt}\\[0.3 cm] 
\displaystyle{\quad \geq \int_0^{T\over 2}({\gamma\over 2}+({\gamma\over 
2}({\lambda_0\over 
16})^2-K)\, 2^{4\nu}\mu(2^{-2\nu}))\|v_\nu\|_{L^2}^2\; dt}\\[0.3cm] 
\displaystyle{\qquad\geq \int_0^{T\over 2}({\gamma\over 2}+c\gamma\, 
2^{2\nu})\|v_\nu\|_{L^2}^2\; dt}\\ 
\end{array} 
\label{estphibig} 
\end{equation} for all $\gamma\geq \gamma_0$ and for all $\nu\geq 1$. Putting together (\ref{estphismall}) 
and (\ref{estphibig}) we have 
that there exist $\gamma_0$ and $c>0$ such that 
\begin{equation} 
\begin{array}{l} 
\displaystyle{\int_0^{T\over 2}\|\partial_t v_\nu+ \sum_{jk} 
\partial_{x_j}(T^m_{a_{jk}}\partial_{x_k} v_\nu) + \Phi'(\gamma(T-t)) v_\nu\|_{L^2}^2 
\, dt}\\[0.3 cm] 
Ê\displaystyle{\qquad \geq\int_0^{T\over 2}({\gamma\over 2}+c\gamma^{1\over 
2}\, 
2^{2\nu})\|v_\nu\|_{L^2}^2\; dt}\\ 
\end{array} 
\label{estnu>0} 
\end{equation} for all $\nu\geq 1$ and for all $\gamma\geq \gamma_0$. 

Form (\ref{estnu=0}) and (\ref{estnu>0}) we get that there exist $\gamma_0$ and 
$ c>0$ such that 
\begin{equation} 
\begin{array}{l} 
\displaystyle{\int_0^{T\over 2}\sum_\nu 2^{-2\nu s}\|\partial_t v_\nu+ \sum_{jk} 
\partial_{x_j}(T^m_{a_{jk}}\partial_{x_k} v_\nu) + \Phi'(\gamma(T-t)) v_\nu\|_{L^2}^2 
\, dt}\\[0.3 cm] 
Ê\displaystyle{\qquad \geq c \gamma^{1\over 2}\int_0^{T\over 
2}\sum_\nu2^{-2\nu s}(\|\nabla v_\nu\|_{L^2}^2+\gamma^{1\over 
2}\|v_\nu\|_{L^2}^2  
)\; dt}\\ 
\end{array} 
\label{finalfinalest} 
\end{equation} for all $\gamma\geq \gamma_0$.



\subsection*{Acknowledgment}
The authors would like to thank prof. Marius Paicu for useful  
and interesting discussions on topics related to this paper.


\begin{thebibliography}{99}

\bibitem{BT} 
{C. Bardos and L. Tartar,}  
{\it Sur l'unicit\'e r\'etrograde des \'equations paraboliques
et quelques questions voisines,\/} 
Arch. Rational Mech. Anal. {\bf 50} (1973), 10-25.

\bibitem{B} J.--M. Bony, {\it Calcul symbolique et propagations des
singularit\'es pour les
\'equations aux d\'eriv\'ees partielles non lin\'eaires}, Ann. Sci. \'Ecole
Norm. Sup\'er. (4) {\bf 14}
(1981), 209--246.

\bibitem {CMey} R. Coifman and Y. Meyer, \lq\lq Au del\`a des op\'erateurs pseudo-dif\-f\'e\-ren\-tiels\rq\rq,  Ast\'erisque \textbf{57},  Soci\'et\'e Math\'ematique de France, Paris, 1978.

\bibitem{CDGS} F. Colombini, E. De Giorgi and S. Spagnolo, {\it Sur les
\'equations hyperboliques avec des coefficients qui ne d\'ependent
que du temps}, Ann. Scuola Norm. Sup. Pisa Cl. Sci. {\bf 6}
(1979), 511-559.

\bibitem {CMet} F. Colombini and G. M\'etivier, \textit{The Cauchy problem for wave equations with non Lipschitz coefficients; application to continuation of solutions of some nonlinear wave equations} Ann. Sci. \'Ec. Norm. Sup\'er. (4)  \textbf{41} (2008), no. 2, 177--220.

\bibitem{DS} D. Del Santo, \textit{A remark on the uniqueness for backward parabolic operators with non-Lipschitz-continuous coefficients}, accepted for publication (2011).

\bibitem{DSP} D. Del Santo and M. Prizzi,  \textit{Backward uniqueness for parabolic operators whose coefficients
are non-Lipschitz continuous in time}, J. Math. Pures Appl. (9) \textbf{84} (2005), no. 4, 471--491.

\bibitem{GR} P. G\'erard and J. Rauch,  \textit{Propagation de la regularit\'e locale de solutions d'\'equations hyperboliques non lin\'eaires}, Ann. Inst. Fourier, Grenoble  \textbf{37} (1987), no. 3, 65--84.

\bibitem{Gh}
{J.-M. Ghidaglia,}  
{\it Some backward uniqueness results,\/}
Nonlinear Anal. {\bf 10}  (1986), no. 8, 777-790.


\bibitem{GT} D. Gilbarg and N.S. Trudinger, \lq\lq Elliptic Partial Equations of Second Order \rq\rq, Grundlehren der mathematischen Wissenschaften \textbf{224}, Springer-Verlag,
Berlin Heidelberg New York, 1977.

\bibitem{LM} J.--L. Lions, B. Malgrange, {\it Sur l'unicit\'e r\'etrograde
dans les probl\`emes
mixtes paraboliques}, Math. Scand. {\bf 8} (1960), 277--286.


\bibitem{Ma} 
{N. Mandache,} 
{\it On a counterexample concerning unique continuation for elliptic
  equations in divergence form,\/}  
Math. Phys. Anal. Geom. {\bf 1}(1998), 273-292. 

\bibitem{M} G. M\'etivier,
\lq\lq Para-differential calculus and applications to the Cauchy problem for nonlinear systems\rq\rq,
Centro di Ricerca Matematica ``Ennio De Giorgi'' (CRM) Series \textbf{5}, Edizioni della Normale, Pisa, 2008.

\bibitem{Mil} K. Miller, {\it Nonunique continuation for uniformly
parabolic and elliptic equations
in self--adjoint divergence form with H\"older continuous coefficients},
Arch. Rat. Mech. Anal.
{\bf 54} (1973), 105--117.

\bibitem{T} S. Tarama,  \textit{Local uniqueness in the Cauchy problem for
second order elliptic
equations with non--Lipschitzian coefficients}, Publ. Res. Inst. Math. Sci.
{\bf 33} (1997),
167--188.

\bibitem{Tay} M. E. Taylor,  \lq\lq Pseudodifferential Operators
and Nonlinear PDE\rq\rq, Progress in Mathematics   \textbf{100}, Birkh\"auser, Boston, 1991.


\end{thebibliography}
\end{document}